\theoremstyle{plain}
\newtheorem{thm}{Theorem}[section]
\newtheorem{cor}[thm]{Corollary}
\newtheorem{lem}[thm]{Lemma}
\newtheorem{note}{Note}[section]
\theoremstyle{definition}
\newtheorem{defn}{Definition}[section]
\newtheorem{rem}{Remark}[section]
\begin{document}
\setcounter{page}{1}
\title{On strong $\lambda$-statistical convergence of sequences in probabilistic metric (PM) spaces }
\author[P. Malik, S. Das]{Prasanta Malik* and Samiran Das*\ }
\newcommand{\acr}{\newline\indent}
\maketitle
\address{{*\,} Department of Mathematics, The University of Burdwan, Golapbag, Burdwan-713104,
West Bengal, India.
                Email: pmjupm@yahoo.co.in, das91samiran@gmail.com \acr
           }

\maketitle
\begin{abstract}
In this paper we study some basic properties of strong
$\lambda$-statistical convergence of sequences in probabilistic
metric (PM) spaces. We also introduce and study the notion of
strong $\lambda$-statistically Cauchyness. Further introducing the
notions of strong $\lambda$-statistical limit point and strong
$\lambda$-statistical cluster point of a sequence in a
probabilistic metric (PM) space we examine their
interrelationship.

\end{abstract}

\author{}
\maketitle { Key words and phrases: probabilistic metric (PM) space, $\lambda$-density, strong $\lambda$-statistical convergence, strong $\lambda$-statistical Cauchyness, strong $\lambda$-statistical limit point, strong $\lambda$-statistical cluster point.} \\

\textbf {AMS subject classification (2010) : 54E70, 40A99}.  \\
\section{\textbf{Introduction:}}

The concept of probabilistic metric (PM) space was introduced and
studied by Menger \cite{Me} under the name of ``statistical metric
space" by taking the distance between two points $a$ and $b$ as a
distribution function $F_{ab}$ instead of a non-negative real
number and the value of the function $F_{ab}$ at any $t> 0 $ i.e.
$F_{ab}(t) $ is interpreted as the probability that the distance
between the points $a$ and $b$ is $\leq t$. After Menger, works of
several mathematicians such as Schwiezer and Sklar \cite{Sh, Sh1,
Sh2, Sh3}, Tardiff \cite{Tar}, Thorp \cite{Th} and many others,
developed the theory of probabilistic metric spaces. A through
development of probabilistic metric spaces can be found in the
famous book of Schwiezer and Sklar \cite{Sh4}. Several topologies
are defined on a PM space but strong topology is one of them,
which received most attention to date  and it is the main tool of
our paper.

As a generalization of the usual notion of convergence of
sequences of real numbers, the notion of statistical convergence
was introduced and studied independently by Fast \cite{Fa} and
Schoenberg \cite{Sc} based on the notion of natural density of
subsets of $\mathbb N$, the set of all natural numbers. A subset
$\mathcal{M}$ of $\mathbb N$ is said to have natural density or
asymptotic density $d(\mathcal{M})$ if
$$d(\mathcal{M})=\lim\limits_{n\rightarrow \infty}\frac{\left|\mathcal{M}(n)\right|}{n}$$
where $\mathcal{M}(n)=\left\{j\in \mathcal{M}:j\leq n\right\}$ and
 $\left|\mathcal{M}(n)\right|$ represents the number of elements in
 $\mathcal{M}(n)$.

A sequence $\{x_k\}_{k\in\mathbb{N}}$ of reals is said to be
statistically convergent to $\xi\in\mathbb{R}$ if for every
$\epsilon>0,~d(A(\epsilon))=0$, where
$A(\epsilon)=\{k\in\mathbb{N}:\left|x_k-\xi\right|\geq\epsilon\}$.

After the great works of Salat \cite{Sa} and Fridy \cite{Fr1},
many works have been done in this area of summability theory. More
primary work on this convergence notion can be found from
\cite{Co1, Co2, Co3,Fr2, Fr3, Pe, St}.

The notion of natural density of subsets of $\mathbb{N}$ was
further generalized to the notion of $\lambda$-density by
Mursaleen \cite{M} and with the help of $\lambda$-density he
generalized the notion of statistical convergence of real
sequences to the notion of $\lambda$-statistical convergence. If
$\lambda=\left\{\lambda_n\right\}_{n\in\mathbb N}$ is a
non-decreasing sequence of positive real numbers tending to
$\infty$ such that
$\lambda_1=1,~~~\lambda_{n+1}\leq\lambda_n+1,~~~n\in\mathbb N$,
then any subset $\mathcal{M}$ of $\mathbb{N}$ is said to have
$\lambda$-density $d_\lambda(\mathcal{M})$ if
$$d_\lambda(\mathcal{M})=\lim\limits_{n\rightarrow\infty}\frac{\left|\{k\in I_n: k\in \mathcal{M}\}\right|}{\lambda_n},$$
where $I_n=[n-\lambda_n + 1,n]$. It is clear that if
$A,B\subset\mathbb{N}$ and $d_\lambda(A)=0$, $d_\lambda(B)=0$ then
$d_\lambda(A^c)=1=d_\lambda(B^c)$, $d_\lambda(A\cup B)=0$. Also if
$A,B\subset\mathbb{N}$, $A\subset B$ and $d_\lambda(B)=0$, then
$d_\lambda(A)=0$. The collection of all such sequences $\lambda$
is denoted by $\Delta_\infty$. Throughout the paper $\lambda$
stands for such a sequence.

If a sequence $x=\{x_k\}_{k \in \mathbb{N}}$ satisfies a property
$\mathfrak{P}$ for each $k$ except for a set of $\lambda$-density
zero, then we say that the sequence $x$ satisfies the property
$\mathfrak{P}$ for ``$\lambda$-almost all $k$'' or in short
``$\lambda\mbox{-}a.a.k.$''.

A sequence $x=\{x_k\}_{k \in \mathbb{N}}$ of real numbers is said
to be $\lambda$-statistically convergent or $S_\lambda$-convergent
to $\mathcal{L}\in\mathbb{R}$ if, for every $\epsilon>0$,
$d_\lambda(M(\epsilon))=0$, where
$M(\epsilon)=\{k\in\mathbb{N}:\left|x_k-\mathcal{L}\right|\geq\epsilon\}$.

If $\lambda_n=n, ~\forall n\in\mathbb{N}$, then the notions of
$\lambda$-density and $\lambda$-statistical convergence coincide
with the notions of natural density and statistical convergence
respectively.

Because of immense importance of probabilistic metric space in
mathematics, the notion of strong statistical convergence was
introduced by \c{S}en\c{c}imen et al. \cite{Se} in a PM space and
this was further generalized to the notion of strong
$\lambda$-statistical convergence by Das et al. \cite{Da3}.

Following the line of \c{S}en\c{c}imen et al. \cite{Se} and also
that of Das et al. \cite{Da3} in this paper we study some basic
properties of strong $\lambda$-statistical convergence of
sequences in probabilistic metric spaces not done earlier. We also
introduce and study the notion of strong $\lambda$-statistical
Cauchyness in a probabilistic metric space. Further we introduce
and study the notions of strong $\lambda$-statistical limit points
and strong $\lambda$-statistical cluster points of a sequence in a
probabilistic metric space including their interrelationship.

\section{\textbf{Basic Definitions and Notations}}

In this section we recall some basic definitions and results
related to probabilistic metric spaces (or PM spaces) (see
\cite{Sh, Sh1, Sh2, Sh3, Sh4} for more details).
\begin{defn}
A nondecreasing function $f: [-\infty, \infty] \rightarrow [0,1]$
with $f(-\infty)=0$ and $ f(\infty)=1,$ is called a distribution
function.
\end{defn}
We denote the set of all distribution functions left continuous
over $(-\infty,\infty)$ by $\mathcal{D}$.

We consider a relation $\leq$ on $\mathcal{D}$ defined by $g \leq
f$ if and only if $g(x)\leq f(x)$ for all $x \in
[-\infty,\infty]$. Clearly the relation `$\leq$' is a partial
order on $\mathcal{D}$.
\begin{defn}
For any $q\in [-\infty, \infty ]$ the unit step at $q$ is denoted
by $\epsilon_{q}$ and is defined to be a function in $\mathcal{D}$
given by
\begin{eqnarray*}
\epsilon_{q}(x)&=&0, ~~ \text{if}~~-\infty \leq x \leq q\\
                  &=&1,~~ \text{if}~~  q < x\leq \infty.
\end{eqnarray*}
\end{defn}
\begin{defn}
A sequence $\{f_{n}\}_{n\in \mathbb{N}}$ of distribution functions
is said to converge weakly to a distribution function $f$, if the
sequence $\{f_{n}(x)\}_{n\in\mathbb{N}}$ converges to $f(x)$ at
each continuity point $x$ of $f$. We write
$f_{n}\xrightarrow{w}f$.
\end{defn}
\begin{defn}
The distance between f and g in $\mathcal{D}$ is denoted by
$d_{L}(f,g)$ and is defined by the infimum of all numbers
$w\in(0,1]$ such that the inequalities
\begin{center}
$~~~~~~~f(x-w)-w\leq g(x)\leq f(x+w)+w$ \\

and $g(x-w)-w\leq f(x)\leq g(x+w)+w$
\end{center}
hold for every $x\in(-\frac{1}{w},\frac{1}{w})$.
\end{defn}
It is known that $d_{L}$ is a metric on $\mathcal{D}$ and for any
sequence $\{f_{n}\}_{n\in\mathbb{N}}$ in $\mathcal{D}$ and
$f\in\mathcal{D}$, we have
\begin{center}
$f_{n}\xrightarrow{w}f$  if and only if $d_{L}(f_{n},f)\rightarrow
0$.
\end{center}
\begin{defn}
A nondecreasing real valued function $h$ defined on $[0,\infty]$
that satisfies $h(0)=0$ and $ h(\infty)=1$ and is left continuous
on $(0,\infty)$ is called a distance distribution function.
\end{defn}
The set of all distance distribution functions is denoted by
$\mathcal{D}^{+}$. The function $d_{L}$ is clearly a metric on
$\mathcal{D}^{+}$. The metric space $(\mathcal{D}^{+},d_{L})$ is
compact and hence complete.
\begin{thm}
Let $f\in\mathcal{D}^{+}$ be given . Then for any $t>0$,
$f(t)>1-t$ if and only if $d_{L}(f,\epsilon_{0})<t$.
\end{thm}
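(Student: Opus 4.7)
My plan is a direct computation from the definition of $d_{L}$, taking advantage of the very simple shape of $\epsilon_{0}$ (only the values $0$ and $1$, with a jump at $0$) together with the fact that $f \in \mathcal{D}^{+}$ means $0 \leq f \leq 1$ and $f$ vanishes on $(-\infty, 0]$.

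First I will unpack $d_{L}(f, \epsilon_{0})$ and show that for a fixed $w \in (0, 1]$, three of the four defining inequalities in Definition~2.4 are automatic and the remaining one collapses to the single clean condition
\[
f(y) \geq 1 - w \quad \text{for every } y \in (w,\; w + 1/w).
\]
Indeed, any inequality involving $f(x - w)$ or $\epsilon_{0}(x - w)$ trivializes once one notes that $f$ vanishes on $(-\infty,0]$ and $f \leq 1$, and the inequality $f(x) \leq \epsilon_{0}(x+w) + w$ is also trivial because $\epsilon_{0}(x+w) = 1$ whenever $x > -w$. Only $\epsilon_{0}(x) \leq f(x+w) + w$ for $x \in (0, 1/w)$ carries content, and (setting $y = x + w$) it rearranges to the displayed condition.

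For the easy direction $(\Leftarrow)$, suppose $d_{L}(f, \epsilon_{0}) < t$. If $t > 1$ then $f(t) \geq 0 > 1 - t$ and we are done. If $t \leq 1$, I pick $w$ with $d_{L}(f, \epsilon_{0}) < w < t$, so the reduced condition holds for this $w$. Because $w < t \leq 1 < w + 1/w$, the point $t$ lies in $(w, w + 1/w)$, and evaluating at $y = t$ yields $f(t) \geq 1 - w > 1 - t$.

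For the harder direction $(\Rightarrow)$, suppose $f(t) > 1 - t$. The case $t > 1$ is immediate, since $d_{L}(f, \epsilon_{0}) \leq 1 < t$ always. For $t \leq 1$, set $\eta = f(t) - (1 - t) > 0$, and use the left continuity of $f$ at $t$ to pick $\delta \in (0, \min(t, \eta))$ such that $f(y) > f(t) - \eta/2 = 1 - t + \eta/2$ for all $y \in (t - \delta, t)$. With $w = t - \delta/2 \in (0, t)$ I then verify the reduced condition on $(w, w + 1/w)$: for $y \in (w, t)$ the left continuity bound gives $f(y) > 1 - t + \eta/2 \geq 1 - t + \delta/2 = 1 - w$, and for $y \geq t$ monotonicity together with $\eta > \delta/2$ gives $f(y) \geq f(t) = 1 - t + \eta > 1 - w$. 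Hence $d_{L}(f, \epsilon_{0}) \leq w < t$.

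The main delicate point is the converse. The hypothesis $f(t) > 1 - t$ is a single-point statement, while $d_{L}(f, \epsilon_{0}) < t$ demands a \emph{uniform} lower bound on $f$ over an interval whose right endpoint exceeds $t$. Coordinating the two parameters $\delta$ (from left continuity) and $\eta$ (the slack in the hypothesis) so that $1 - w$ is beaten both below $t$ and above $t$ is the heart of the argument; the constraint $\delta < \eta$ is exactly what makes this work.
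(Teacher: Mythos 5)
The paper states Theorem 2.1 without proof (it is quoted as known background from the Schweizer--Sklar theory), so there is no in-paper argument to compare against; your proof therefore has to stand on its own, and it does. The reduction of $d_{L}(f,\epsilon_{0})\leq w$ to the single condition $f(y)\geq 1-w$ on $(w,\,w+1/w)$ is the right move, and both directions are handled correctly, including the $t>1$ edge cases (via $d_{L}\leq 1$) and the coordination $\delta<\eta$ that makes $1-w$ beatable on both sides of $t$ in the converse. One inaccuracy in your justification of the reduction: the inequality $\epsilon_{0}(x-w)-w\leq f(x)$ does \emph{not} trivialize from $f$ vanishing on $(-\infty,0]$ and $f\leq 1$; for $x\in(w,1/w)$ it demands $f(x)\geq 1-w$, which is genuine content. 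It is, however, subsumed by the condition you keep, since $(w,1/w)\subset(w,\,w+1/w)$, so your reduced condition is still exactly equivalent to the conjunction of all four inequalities and nothing downstream is affected --- but you should say ``redundant given the second inequality'' rather than ``trivial.'' You might also state explicitly that $f\in\mathcal{D}^{+}$ is extended by $0$ on the negative reals (the standard embedding of $\mathcal{D}^{+}$ into $\mathcal{D}$), since $f(x-w)$ is evaluated at negative arguments in Definition 2.4.
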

\begin{defn}
A triangle function is a binary operation $\tau$ on
$\mathcal{D}^{+}$ which is commutative, nondecreasing, associative
in each place and $\epsilon_{0}$ is the identity.
\end{defn}
\begin{defn}
A probabilistic metric space, briefly PM space, is a triplet
$(X,\mathcal{F},\tau)$, where $X$ is a nonempty set whose elements
are the points of the space, $\mathcal{F}$ is a function from
$X\times X$ into $\mathcal{D}^{+}$, $\tau$ is a triangle function
and the following conditions are satisfied for all $x,y,z\in X$:
    \begin{enumerate}
    \item $\mathcal{F}(x,x)=\epsilon_{0}$
    \item $\mathcal{F}(x,y)\neq\epsilon_{0}$ if $x\neq y $
    \item $\mathcal{F}(x,y)=\mathcal{F}(y,x) $
    \item $ \mathcal{F}(x,z)\geq \tau(\mathcal{F}(x,y),\mathcal{F}(y,z))$.
    \end{enumerate}
From now on we will denote $\mathcal{F}(x,y)$ by
$\mathcal{F}_{xy}$ and its value at $b$ by $\mathcal{F}_{xy}(b)$.
\end{defn}
\begin{defn}
Let $(X,\mathcal{F},\tau)$ be a PM space. For $x\in X$ and $r>0$,
the strong $r$-neighborhood of $x$ is denoted by
$\mathcal{N}_{x}(r)$ and is defined by
\begin{center}
$\mathcal{N}_{x}(r)=\{y\in X : \mathcal{F}_{xy}(r)>1-r\} $.
\end{center}
The collection $\mathfrak{N}_{x}=\{\mathcal{N}_{x}(r):r>0 \}$ is
called the strong neighborhood system at $x$ and the union
$\mathfrak{N}=\underset{x\in X}{\bigcup}\mathfrak{N}_{x}$ is
called the strong neighborhood system for $ X $.
\end{defn}
From Theorem 2.1, we can write $\mathcal{N}_{x}(r)=\{y\in X :
d_{L}(\mathcal{F}_{xy},\epsilon_{0})<r\} $. If $\tau$ is
continuous, then the strong neighborhood system $\mathfrak{N}$
determines a Hausdorff topology for $X$. This topology is called
the strong topology for $X$ and members of this topology are
called strongly open sets.

Throughout the paper, in a PM space $(X,\mathcal{F},\tau)$, we
always consider that $\tau$ is continuous and $X$ is endowed with
the strong topology.

In a PM space $(X,\mathcal{F},\tau)$ the strong closure of any
subset $A$ of $X$ is denoted by $k(A)$ and for any nonempty subset
$A$ of $X$ strong closure of $A$ is defined by,
$$k(A)=\{a\in X: ~\text{for any}~ t>0, ~\exists~ b\in A ~\text{such that}~\mathcal{F}_{ab}(t)>1-t\}.$$

\begin{defn}\cite{Du1}
Let $(X,\mathcal{F},\tau)$ be a PM space. Then a subset $H$ of $X$
is called strongly closed if its complement is a strongly open
set.
\end{defn}
\begin{defn}
Let $(X,\mathcal{F},\tau)$ be a PM space and $H\neq \emptyset$ be
a subset of $X$. Then $\xi\in X$ is said to be a strong limit
point of $H$ if for every $t>0$,
$$\mathcal{N}_\xi(t)\cap(H\setminus\{\xi\})\neq\emptyset.$$
The set of all strong limit points of the set $H$ is denoted by
$L_H^\mathcal{F}$.
\end{defn}
\begin{defn}\cite{Du1}
Let $(X,\mathcal{F},\tau)$ be a PM space and $H$ be a subset of
$X$. Let $\mathfrak{Q}$ be a family of strongly open subsets of
$X$ such that $\mathfrak{Q}$ covers $H$. Then $\mathfrak{Q}$ is
said to be a strong open cover for $H$.
\end{defn}
\begin{defn}\cite{Du1}
Let $(X,\mathcal{F},\tau)$ be a PM space and $H$ be a subset of
$X$. Then $H$ is called a strongly compact set if every strong
open cover of $H$ has a finite subcover.
\end{defn}
\begin{defn}\cite{Du1}
Let $(X,\mathcal{F},\tau)$ be a PM space,
$x=\left\{x_{k}\right\}_{k\in\mathbb N}$ be a sequence in $X$.
Then $x$ is said to be strongly bounded if there exists a strongly
compact subset $E$ of $X$ such that $x_k\in E$, $\forall
k\in\mathbb{N}$.
\end{defn}
\begin{defn}\cite{Du1}
Let $(X,\mathcal{F},\tau)$ be a PM space,
$x=\left\{x_{k}\right\}_{k\in\mathbb N}$ be a sequence in $X$.
Then $x$ is said to be strongly statistically bounded if there
exists a strongly compact subset $E$ of $X$ such that
$d(\{k\in\mathbb{N}:x_k\notin E\})=0$.
\end{defn}
\begin{thm}\cite{Du1}
Let $(X,\mathcal{F},\tau)$ be a PM space and $H$ be a strongly
compact subset of $X$. Then every strongly closed subset of $H$ is
strongly compact.
\end{thm}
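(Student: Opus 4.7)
The plan is to adapt the standard general-topology argument that a closed subset of a compact space is compact. Since the excerpt has already established that when $\tau$ is continuous the strong neighborhood system $\mathfrak{N}$ generates a genuine (Hausdorff) topology on $X$, with strongly open and strongly closed sets behaving in the usual way, this adaptation should be essentially verbatim.

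Let $K$ be a strongly closed subset of $H$, interpreted as a subset $K\subseteq H$ whose complement $X\setminus K$ is strongly open (Definition 2.10). First I would fix an arbitrary strong open cover $\mathfrak{Q}$ of $K$ (Definition 2.11). Adjoining the strongly open set $X\setminus K$ to $\mathfrak{Q}$, I obtain a new family $\mathfrak{Q}^{\ast}=\mathfrak{Q}\cup\{X\setminus K\}$ consisting of strongly open subsets of $X$ whose union is all of $X$; in particular $\mathfrak{Q}^{\ast}$ is a strong open cover of $H$.

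Next I would invoke the strong compactness of $H$ (Definition 2.13) to extract a finite subcover $\{U_{1},\dots,U_{n}\}\subseteq\mathfrak{Q}^{\ast}$ of $H$. Because $K\subseteq H$, this finite family also covers $K$. If the set $X\setminus K$ occurs among the $U_{i}$'s, it contributes no point of $K$, so discarding it leaves a finite subfamily of the original $\mathfrak{Q}$ that still covers $K$. Hence every strong open cover of $K$ admits a finite subcover, i.e., $K$ is strongly compact.

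There is no substantive obstacle here; the argument is a direct transcription of the classical proof into the strong topology vocabulary. The only step that deserves a moment's care is verifying that $\mathfrak{Q}^{\ast}$ is again a cover of $H$ by strongly open sets — this is immediate from the definition of \emph{strongly closed} and the fact that unions of strongly open sets are strongly open, which is part of $\mathfrak{N}$ generating a topology on $X$. Should the intended reading of the hypothesis be ``closed in the subspace topology on $H$,'' one would write $K=C\cap H$ for some strongly closed $C\subseteq X$ and rerun the above with $X\setminus C$ in place of $X\setminus K$, arriving at the same conclusion.
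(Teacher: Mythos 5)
Your argument is correct: since the paper stipulates that $\tau$ is continuous, the strong neighborhood system generates a genuine topology on $X$, so the classical proof that a closed subset of a compact set is compact transfers verbatim, exactly as you carry it out (adjoin $X\setminus K$ to the cover, extract a finite subcover of $H$, discard $X\setminus K$). The paper itself states this result as a citation to Dutta--Malik--Maity and gives no proof, so there is nothing to compare against; your proof is the standard one and fills that gap correctly, including the careful remark about the two possible readings of ``strongly closed subset of $H$.''
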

\begin{defn}
Let $(X,\mathcal{F},\tau)$ be a PM space. Then for any $r>0$, the
subset $\mathfrak{V}(r)$ of $ X\times X$ given by
\begin{center}
$\mathfrak{V}(r)=\{ (x,y): \mathcal{F}_{xy} (r)>1-r\} $
\end{center}
is called the strong $r$-vicinity.
\end{defn}
\begin{thm}
Let $(X,\mathcal{F},\tau)$ be a PM space and $\tau $ be
continuous. Then for any $r>0$, there is an $\eta>0$ such that
$\mathfrak{V}(\eta)\circ\mathfrak{V}(\eta)\subset\mathfrak{V}(r)$,
where $\mathfrak{V}(\eta)\circ\mathfrak{V}(\eta)=\{(x,z):$
\mbox{for some} $y$, $(x,y)$ and $(y,z)\in \mathfrak{V}(\eta)\} $.
 \end{thm}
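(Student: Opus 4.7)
The plan is to translate the vicinity condition into the metric $d_L$ using Theorem 2.1, then exploit the continuity of the triangle function $\tau$ at $(\epsilon_0,\epsilon_0)$ together with the PM-space triangle inequality \mbox{$\mathcal{F}_{xz}\geq\tau(\mathcal{F}_{xy},\mathcal{F}_{yz})$}. The whole proof is essentially a bookkeeping argument converting between ``$f(t)>1-t$'' and ``$d_L(f,\epsilon_0)<t$''.

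First I would rewrite the strong $r$-vicinity as $\mathfrak{V}(r)=\{(x,y):d_L(\mathcal{F}_{xy},\epsilon_0)<r\}$ using Theorem 2.1. Next I would use the fact that $\epsilon_0$ is the identity for $\tau$, so $\tau(\epsilon_0,\epsilon_0)=\epsilon_0$. Given $r>0$, continuity of $\tau:\mathcal{D}^+\times\mathcal{D}^+\to\mathcal{D}^+$ at the point $(\epsilon_0,\epsilon_0)$ (with respect to the metric $d_L$) supplies an $\eta_0>0$ such that whenever $d_L(f,\epsilon_0)<\eta_0$ and $d_L(g,\epsilon_0)<\eta_0$, one has $d_L(\tau(f,g),\epsilon_0)<r$. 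I would then set $\eta=\min\{\eta_0,r\}$.

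Now suppose $(x,z)\in\mathfrak{V}(\eta)\circ\mathfrak{V}(\eta)$, so there exists $y\in X$ with $(x,y),(y,z)\in\mathfrak{V}(\eta)$. The rewritten form gives $d_L(\mathcal{F}_{xy},\epsilon_0)<\eta\leq\eta_0$ and $d_L(\mathcal{F}_{yz},\epsilon_0)<\eta\leq\eta_0$, hence $d_L(\tau(\mathcal{F}_{xy},\mathcal{F}_{yz}),\epsilon_0)<r$, i.e.\ $\tau(\mathcal{F}_{xy},\mathcal{F}_{yz})(r)>1-r$ by Theorem 2.1. The PM-space triangle inequality then yields $\mathcal{F}_{xz}(r)\geq\tau(\mathcal{F}_{xy},\mathcal{F}_{yz})(r)>1-r$, so $(x,z)\in\mathfrak{V}(r)$, as required.

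The only potentially subtle point is justifying the use of continuity of $\tau$ in the $d_L$-metric at $(\epsilon_0,\epsilon_0)$; this is standard once one recalls that $(\mathcal{D}^+,d_L)$ is the ambient metric space on which $\tau$ is assumed continuous (weak convergence of distribution functions is equivalent to $d_L$-convergence). Everything else is a routine transcription between pointwise inequalities of distribution functions and $d_L$-closeness to $\epsilon_0$, and the monotonicity step $\mathcal{F}_{xz}\geq\tau(\mathcal{F}_{xy},\mathcal{F}_{yz})\Rightarrow\mathcal{F}_{xz}(r)\geq\tau(\mathcal{F}_{xy},\mathcal{F}_{yz})(r)$.
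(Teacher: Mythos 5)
Your proof is correct. The paper states this theorem in its preliminaries without proof (it is a standard fact recalled from the Schweizer--Sklar theory of PM spaces), so there is no in-paper argument to compare against; your route --- translating the vicinity condition into $d_L(\mathcal{F}_{xy},\epsilon_0)<\eta$ via Theorem 2.1, invoking continuity of $\tau$ at $(\epsilon_0,\epsilon_0)$ together with $\tau(\epsilon_0,\epsilon_0)=\epsilon_0$, and finishing with the pointwise triangle inequality $\mathcal{F}_{xz}\geq\tau(\mathcal{F}_{xy},\mathcal{F}_{yz})$ --- is the standard one, and every step (including the harmless choice $\eta=\min\{\eta_0,r\}$) checks out.
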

\begin{note}
From the hypothesis of Theorem 2.2, we can say that for any $r>0$,
there is an $\eta >0$ such that $\mathcal{F}_{ab}(r)>1-r$ whenever
$\mathcal{F}_{ac}(\eta)>1-\eta $ and
$\mathcal{F}_{cb}(\eta)>1-\eta$. Equivalently it can be written
as: for any $r>0$, there is an $\eta>0$ such that
$d_{L}(\mathcal{F}_{ab},\epsilon_{0})<r$ whenever
$d_{L}(\mathcal{F}_{ac},\epsilon_{0})<\eta$ and
$d_{L}(\mathcal{F}_{cb},\epsilon_{0})<\eta$.
 \end{note}

\begin{defn}\cite{Se}
Let $(X,\mathcal{F},\tau)$ be a PM space. A sequence
$x=\{x_k\}_{k\in \mathbb{N}}$ in $X$ is said to be strongly
convergent to $\mathcal{L}\in X$ if for every $t>0$, $\exists$ a
natural number $k_0$ such that
$$x_k\in\mathcal{N}_\mathcal{L}(t),\hspace{1 in} \text{whenever}~ k\geq k_0.$$
\end{defn}
In this case, we write
$\mathcal{F}$-$\lim\limits_{k\rightarrow\infty}x_k=\mathcal{L}$ or
$x_k\stackrel{\mathcal{F}}\longrightarrow \mathcal{L}$.
\begin{defn}\cite{Sh4}
Let $(X,\mathcal{F},\tau)$ be a PM space. A sequence
$x=\{x_k\}_{k\in \mathbb{N}}$ in $X$ is said to be strong Cauchy
if for every $t>0$, $\exists$ a natural number $k_0$ such that
$$(x_j,x_k)\in\mathfrak{U}(t), \hspace{1 in} \text{whenever}~ j,k\geq k_0 .$$
\end{defn}
\begin{defn}\cite{Se}
Let $(X,\mathcal{F},\tau)$ be a PM space. A sequence $x=\{x_k\}_{k
\in \mathbb{N}}$ in $X$ is said to be strongly statistically
convergent to $\xi\in X$ if for any $t>0$
\begin{center}
$d(\{ k\in\mathbb{N}: \mathcal{F}_{x_k\xi}(t)\leq1-t\})=0
~~~~~~~~\text{or}~~~~~~~~ d(\{ k\in\mathbb{N}:
x_k\notin\mathcal{N}_\xi(t)\})=0.$
\end{center}
In this case we write
$st^{\mathcal{F}}$-$\lim\limits_{k\rightarrow \infty}x_k =\xi$.
\end{defn}
\begin{defn}
Let $(X,\mathcal{F},\tau)$ be a PM space. A sequence $x=\{x_k\}_{k
\in \mathbb{N}}$ in $X$ is said to be strong statistically Cauchy
if for any $t>0$, $\exists$ a natural number $N_0=N_0(t)$ such
that
$$d(\{k\in\mathbb{N}:\mathcal{F}_{x_kx_{N_0}}(t)\leq 1-t\})=0.$$
\end{defn}

\section{\textbf{Strong $\lambda$-Statistical Convergence and Strong $\lambda$-Statistical Cauchyness}}

In this section we first study some basic properties of strong
$\lambda$-statistical convergence in a PM space and then
introducing the notion of strong $\lambda$-statistical Cauchyness
we study its relationship with strong $\lambda$-statistical
convergence.
\begin{defn}\cite{Da3}
Let $(X,\mathcal{F},\tau)$ be a PM space,
$x=\{x_k\}_{k\in\mathbb{N}}$ be a sequence in $X$ and
$\lambda\in\Delta_\infty$. Then $x$ is said to be strongly
$\lambda$-statistically convergent to $\mathcal{L}\in X$, if for
every $t>0$,
$$d_\lambda(\{j\in \mathbb{N}: \mathcal{F}_{x_j\mathcal{L}}(t)\leq1-t\})=0.$$
or
$$d_\lambda(\{j\in \mathbb{N}: x_j \notin \mathcal{N}_\mathcal{L}(t)\})=0.$$
In this case we write,
$st_\lambda^{\mathcal{F}}$-$\lim\limits_{k\rightarrow \infty}x_k =
\mathcal{L} $ or simply as
$x_k\xrightarrow{st^\mathcal{F}_{\lambda}}\mathcal{L}$ and
$\mathcal{L}$ is called strong $\lambda$-statistical limit of $x$.
\end{defn}
\begin{rem}
From Theorem 2.1 and Definition 3.1 we see that the following
statements are equivalent:
\begin{enumerate}
    \item $x_k\xrightarrow{st^\mathcal{F}_{\lambda}}\mathcal{L}$
    \item For each $t>0,d_\lambda(\{k\in \mathbb{N}: d_L(\mathcal{F}_{x_k\mathcal{L}},\epsilon_0)\geq t\})=0$
    \item $ st^\mathcal{F}_\lambda\mbox{-}\lim\limits_{k\rightarrow\infty}d_L(\mathcal{F}_{x_k\mathcal{L}},\epsilon_0)=0$.
\end{enumerate}
\end{rem}
\begin{thm}
Let $(X,\mathcal{F},\tau)$ be a PM space, $x=\{x_k\}_{k
\in\mathbb{N}}$ be a sequence in $X$ and
$\lambda\in\Delta_\infty$. If the sequence $x=\{x_k\}_{k \in
\mathbb{N}}$ is strongly $\lambda$-statistically convergent in
$X$, then the strong $\lambda$-statistical limit of $x$ is unique.
\end{thm}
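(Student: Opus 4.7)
The plan is to argue by contradiction, mimicking the classical uniqueness-of-limit argument but with the triangle inequality replaced by the $\tau$-continuity property in Note~2.1.

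Suppose $\mathcal{L}_{1}\neq\mathcal{L}_{2}$ are both strong $\lambda$-statistical limits of $x$. Since $\mathcal{L}_{1}\neq\mathcal{L}_{2}$, axiom (2) of a PM space gives $\mathcal{F}_{\mathcal{L}_{1}\mathcal{L}_{2}}\neq\epsilon_{0}$, so $d_{L}(\mathcal{F}_{\mathcal{L}_{1}\mathcal{L}_{2}},\epsilon_{0})>0$. Fix any $r$ with $0<r<d_{L}(\mathcal{F}_{\mathcal{L}_{1}\mathcal{L}_{2}},\epsilon_{0})$, and invoke Note~2.1 to choose $\eta>0$ such that for all $a,b,c\in X$,
\[
 d_{L}(\mathcal{F}_{ac},\epsilon_{0})<\eta \text{ and } d_{L}(\mathcal{F}_{cb},\epsilon_{0})<\eta \;\Longrightarrow\; d_{L}(\mathcal{F}_{ab},\epsilon_{0})<r.
\]
Taking $a=\mathcal{L}_{1}$, $b=\mathcal{L}_{2}$, and $c=x_{k}$, this forces the set
\[
 C=\{k\in\mathbb{N}:d_{L}(\mathcal{F}_{x_{k}\mathcal{L}_{1}},\epsilon_{0})<\eta \text{ and } d_{L}(\mathcal{F}_{x_{k}\mathcal{L}_{2}},\epsilon_{0})<\eta\}
\]
to be empty, because the conclusion $d_{L}(\mathcal{F}_{\mathcal{L}_{1}\mathcal{L}_{2}},\epsilon_{0})<r$ would contradict our choice of $r$.

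Next I would translate the two hypotheses of strong $\lambda$-statistical convergence into density statements using Remark~3.1: the sets
\[
 A=\{k\in\mathbb{N}:d_{L}(\mathcal{F}_{x_{k}\mathcal{L}_{1}},\epsilon_{0})\geq\eta\}, \qquad B=\{k\in\mathbb{N}:d_{L}(\mathcal{F}_{x_{k}\mathcal{L}_{2}},\epsilon_{0})\geq\eta\}
\]
both satisfy $d_{\lambda}(A)=d_{\lambda}(B)=0$. By the elementary property of $\lambda$-density recalled in the introduction, $d_{\lambda}(A\cup B)=0$ and hence $d_{\lambda}((A\cup B)^{c})=1$. But $(A\cup B)^{c}=A^{c}\cap B^{c}=C$, which we already showed is empty; since $\lambda_{n}\to\infty$, the empty set has $\lambda$-density $0$, not $1$, yielding the desired contradiction.

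The proof is essentially a bookkeeping exercise; the only mildly nontrivial step is the passage from the two distinct limits to an empty intersection of two density-one sets, and this is handled cleanly by Note~2.1 (replacing the ordinary triangle inequality) together with the closure of the zero-$\lambda$-density ideal under finite unions. I do not anticipate any obstacle beyond correctly chaining these two observations.
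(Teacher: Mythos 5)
Your proposal is correct and follows essentially the same route as the paper's proof: contradiction via two distinct limits, Note~2.1 in place of the triangle inequality, and the closure of zero-$\lambda$-density sets under finite unions to produce a density-one set on which both distances are small. The only cosmetic difference is that you derive the contradiction from the emptiness of the intersection set, whereas the paper picks a point in the density-one complement and contradicts $d_{L}(\mathcal{F}_{\xi_{1}\xi_{2}},\epsilon_{0})=t$ directly.
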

\begin{proof}
Let the sequence $x=\{x_k\}_{k \in\mathbb{N}}$ be strongly
$\lambda$-statistically convergent in $X$. If possible, let $
st_\lambda^{\mathcal{F}}$-$\lim\limits_{k\rightarrow\infty}x_k =
\xi_1 $ and $st_\lambda^{\mathcal{F}}$-$\lim\limits_{k\rightarrow
\infty}x_k = \xi_2$ with $\xi_1\neq \xi_2$. Since $\xi_1 \neq
\xi_2$, $\mathcal{F}_{\xi_1\xi_2}\neq\epsilon_0$. Then there is a
$t>0$ such that $d_L(\mathcal{F}_{\xi_1\xi_2},\epsilon_0)=t$. We
choose $\gamma>0$ so that
$d_L(\mathcal{F}_{uv},\epsilon_0)<\gamma$ and
$d_L(\mathcal{F}_{vw},\epsilon_0)<\gamma $ imply that
$d_L(\mathcal{F}_{uw},\epsilon_0)<t$. Since
$st_\lambda^{\mathcal{F}}$-$\lim\limits_{k\rightarrow \infty}x_k =
\xi_1 $ and
$st_\lambda^{\mathcal{F}}$-$\lim\limits_{k\rightarrow\infty}x_k =
\xi_2$, so $d_{\lambda}(A_1(\gamma)) = 0$ and
$d_{\lambda}(A_2(\gamma)) = 0$, where
$$A_1(\gamma)=\{k\in\mathbb{N}: \mathcal{F}_{x_k\xi_1}(\gamma)\leq 1- \gamma\}$$ and
$$A_2(\gamma)=\{k\in \mathbb{N}: \mathcal{F}_{x_k\xi_2}(\gamma)\leq 1- \gamma\}.$$
Now let $A_3(\gamma)=A_1(\gamma)\cup A_2(\gamma)$. Then
$d_\lambda(A_3(\gamma))=0$ and this gives
$d_\lambda(A^c_3(\gamma))=1$. Let $k\in A^c_3(\gamma)$. Then
$d_L(\mathcal{F}_{x_k\xi_1},\epsilon_0)<\gamma$ and
$d_L(\mathcal{F}_{\xi_2x_k},\epsilon_0)<\gamma$ and so
$d_L(\mathcal{F}_{\xi_1\xi_2},\epsilon_0)<t$, this gives a
contradiction. Hence strong $\lambda$-statistical limit of a
strongly $\lambda$-statistically convergent sequence in a PM space
is unique.
\end{proof}
\begin{thm}
Let $(X,\mathcal{F},\tau)$ be a PM space and
$\{x_n\}_{n\in\mathbb{N}}$, $\{y_n\}_{n\in\mathbb{N}}$ be two
sequences in $X$ such that
$x_n\xrightarrow{st^\mathcal{F}_{\lambda}} l \in X $ and
$y_n\xrightarrow{st^\mathcal{F}_{\lambda}} m \in X$. Then
$$st^\mathcal{F}_\lambda\mbox{-}\lim\limits_{n\rightarrow\infty}d_L(\mathcal{F}_{x_ny_n}, \mathcal{F}_{lm})=0.$$
\end{thm}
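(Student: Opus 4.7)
The plan is to show that for every $\epsilon>0$ the set $E(\epsilon)=\{n\in\mathbb{N}:d_L(\mathcal{F}_{x_ny_n},\mathcal{F}_{lm})\geq\epsilon\}$ has $\lambda$-density zero, since by Remark 3.1 this is equivalent to the desired conclusion. The argument has a ``soft'' part (a joint-continuity estimate for $\mathcal{F}$) and a ``hard density'' part that is essentially the template of Theorem 3.1.

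First I would establish the following joint-continuity claim: for every $\epsilon>0$ there exists $\delta>0$ such that
\begin{equation*}
d_L(\mathcal{F}_{ul},\epsilon_0)<\delta \ \text{and}\ d_L(\mathcal{F}_{vm},\epsilon_0)<\delta \ \Longrightarrow\ d_L(\mathcal{F}_{uv},\mathcal{F}_{lm})<\epsilon.
\end{equation*}
The PM-triangle inequality, together with the commutativity and associativity of $\tau$, yields the two bounds $\mathcal{F}_{uv}\geq\tau(\mathcal{F}_{ul},\tau(\mathcal{F}_{lm},\mathcal{F}_{mv}))$ and $\mathcal{F}_{lm}\geq\tau(\mathcal{F}_{lu},\tau(\mathcal{F}_{uv},\mathcal{F}_{vm}))$. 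Since $\tau$ is continuous and $\epsilon_0$ is its identity, when $\mathcal{F}_{ul}$ and $\mathcal{F}_{vm}$ are $d_L$-close to $\epsilon_0$ the right-hand sides of these inequalities are $d_L$-close to $\mathcal{F}_{lm}$ and $\mathcal{F}_{uv}$ respectively, whence $\mathcal{F}_{uv}$ and $\mathcal{F}_{lm}$ must be $d_L$-close to each other.

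Once this is in hand, the density argument is short. Fix $\epsilon>0$, pick a corresponding $\delta>0$, and set
\begin{equation*}
B_1=\{n\in\mathbb{N}:d_L(\mathcal{F}_{x_nl},\epsilon_0)\geq\delta\},\qquad B_2=\{n\in\mathbb{N}:d_L(\mathcal{F}_{y_nm},\epsilon_0)\geq\delta\}.
\end{equation*}
By the hypotheses and Remark 3.1, $d_\lambda(B_1)=d_\lambda(B_2)=0$, hence $d_\lambda(B_1\cup B_2)=0$. The joint-continuity claim gives $E(\epsilon)\subseteq B_1\cup B_2$, so $d_\lambda(E(\epsilon))=0$, which is what we wanted.

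The main obstacle is Step 1. Note 2.1 only gives continuity of $d_L(\cdot,\epsilon_0)$ under composition, whereas here we need to compare the full distance distribution functions $\mathcal{F}_{uv}$ and $\mathcal{F}_{lm}$, not merely their $d_L$-distance to $\epsilon_0$. This requires the continuity of $\tau$ at points of the form $(\epsilon_0,g)$ with $g=\mathcal{F}_{lm}$ arbitrary, rather than the special case $g=\epsilon_0$ implicitly used in Theorem 3.1. Once this continuity is invoked cleanly, everything downstream is a standard application of the basic properties of $\lambda$-density, exactly as in the uniqueness proof.
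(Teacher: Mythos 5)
Your proposal is correct, and the density half of it is exactly the paper's argument: both proofs reduce the theorem to the inclusion $E(\epsilon)\subseteq B_1\cup B_2$ and then use subadditivity of $\lambda$-density-zero sets. The difference lies entirely in how the key continuity estimate is obtained. The paper simply asserts that $\mathcal{F}:X\times X\to(\mathcal{D}^+,d_L)$ is uniformly continuous (a standard fact from Schweizer--Sklar, following from continuity of $\tau$ and the strong/vicinity structure) and reads off your ``joint-continuity claim'' directly. You instead derive it from first principles via the two sandwich inequalities $\mathcal{F}_{uv}\geq\tau(\mathcal{F}_{ul},\tau(\mathcal{F}_{lm},\mathcal{F}_{mv}))$ and $\mathcal{F}_{lm}\geq\tau(\mathcal{F}_{lu},\tau(\mathcal{F}_{uv},\mathcal{F}_{vm}))$. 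This works, but two points deserve to be made explicit if you write it out. First, the step ``whence $\mathcal{F}_{uv}$ and $\mathcal{F}_{lm}$ must be $d_L$-close'' is not purely formal: you are combining one-sided order bounds with $d_L$-closeness, and you need the observation that for $f\geq g$ in $\mathcal{D}^+$ two of the four L\'evy inequalities defining $d_L(f,g)\leq w$ hold automatically, so that $F\geq G_1$ with $d_L(G_1,L)<\epsilon$ yields $L(x)\leq F(x+\epsilon)+\epsilon$, while $L\geq G_2$ with $d_L(G_2,F)<\epsilon$ yields $F(x)\leq L(x+\epsilon)+\epsilon$; together these give $d_L(F,L)\leq\epsilon$. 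Second, in the bound for $\mathcal{F}_{lm}$ the middle argument of $\tau$ is $\mathcal{F}_{uv}$, which varies with $n$, so you need the convergence $\tau(f,\tau(g,h))\to g$ as $f,h\to\epsilon_0$ to be uniform in $g$; this is where compactness of $(\mathcal{D}^+,d_L)$ (hence uniform continuity of the continuous $\tau$) enters, not merely continuity at the single point $(\epsilon_0,\mathcal{F}_{lm})$. With those two remarks supplied, your route is a self-contained proof of the lemma the paper takes for granted, at the cost of some extra bookkeeping; the paper's route is shorter but leans on an external uniform-continuity theorem.
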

\begin{proof}
Since $\tau$ is continuous and $X$ is endowed with the strong
topology, so $\mathcal{F}$ is uniformly continuous. So for any
$t>0$ there exists $\eta(t)>0$ such that
$d_L(\mathcal{F}_{lm},\mathcal{F}_{l_1m_1})<t$, whenever
$l_1\in\mathcal{N}_l(\eta)$ and $m_1\in\mathcal{N}_m(\eta)$. Then
by the given condition, for any $t > 0$
$$\{n\in\mathbb{N} :d_L(\mathcal{F}_{x_ny_n}, \mathcal{F}_{lm})\geq t\}\subset\{n\in \mathbb{N}:x_n\notin\mathcal{N}_l(\eta)\}\cup\{n\in\mathbb{N}:y_n\notin\mathcal{N}_m(\eta)\}.$$ This gives
$$\{n\in I_k:d_L(\mathcal{F}_{x_ny_n}, \mathcal{F}_{lm})\geq t\}\subset\{n\in I_k:x_n\notin \mathcal{N}_l(\eta)\}\cup\{n\in I_k:y_n\notin \mathcal{N}_m(\eta)\}.$$ Thus,
\begin{eqnarray*}
&&~~~d_{\lambda}(\{n\in
\mathbb{N}: d_L(\mathcal{F}_{x_ny_n}, \mathcal{F}_{lm})\geq t\})\\
&\leq&~ d_{\lambda}(\{n\in \mathbb{N}: x_n\notin
\mathcal{N}_l(\eta)\}\cup\{n\in \mathbb{N}: y_n\notin
\mathcal{N}_m(\eta)\}).
\end{eqnarray*}
As $ x_n\xrightarrow{st^\mathcal{F}_{\lambda}}l$ and
$y_n\xrightarrow{st^\mathcal{F}_{\lambda}}m$, so right hand side
of the above inequality is zero and so $$d_{\lambda}(\{n\in
\mathbb{N}: d_L(\mathcal{F}_{x_ny_n}, \mathcal{F}_{lm})\geq
t\})=0.$$ Hence
$st^\mathcal{F}_\lambda\mbox{-}\lim\limits_{n\rightarrow\infty}d_L(\mathcal{F}_{x_ny_n},
\mathcal{F}_{lm})=0$.
\end{proof}
\begin{thm}
Let $(X,\mathcal{F},\tau)$ be a PM space, $x=\{x_k\}_{k
\in\mathbb{N}}$ be a sequence in $X$ and
$\lambda\in\Delta_\infty$. If the sequence $x=\{x_k\}_{k \in
\mathbb{N}}$ is strongly convergent to $\mathcal{L}\in X$, then
$st^\mathcal{F}_\lambda\mbox{-}\lim\limits_{k\rightarrow\infty}x_k=\mathcal{L}$.
\end{thm}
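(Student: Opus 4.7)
The plan is to show that strong convergence of $\{x_k\}$ to $\mathcal{L}$ forces the exceptional set (where the strong $t$-neighborhood condition fails) to be finite, and then to observe that any finite subset of $\mathbb{N}$ automatically has $\lambda$-density zero because $\lambda_n \to \infty$.

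First I would fix an arbitrary $t > 0$ and invoke the definition of strong convergence (Definition 2.11): there exists $k_0 = k_0(t) \in \mathbb{N}$ such that $x_k \in \mathcal{N}_\mathcal{L}(t)$, i.e. $\mathcal{F}_{x_k\mathcal{L}}(t) > 1-t$, for every $k \geq k_0$. Setting $A(t) = \{k \in \mathbb{N} : \mathcal{F}_{x_k\mathcal{L}}(t) \leq 1-t\}$, this immediately gives $A(t) \subseteq \{1,2,\ldots,k_0-1\}$, so $A(t)$ is finite with $|A(t)| \leq k_0 - 1$.

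Next I would estimate the $\lambda$-density of $A(t)$. For every $n \in \mathbb{N}$,
\[
\frac{|\{k \in I_n : k \in A(t)\}|}{\lambda_n} \leq \frac{|A(t)|}{\lambda_n} \leq \frac{k_0-1}{\lambda_n}.
\]
Since $\lambda \in \Delta_\infty$ is a non-decreasing sequence tending to $\infty$, the right-hand side converges to $0$ as $n \to \infty$. Therefore $d_\lambda(A(t)) = 0$, and as $t > 0$ was arbitrary, the definition of strong $\lambda$-statistical convergence (Definition 3.1) yields $st^\mathcal{F}_\lambda\mbox{-}\lim_{k\rightarrow\infty} x_k = \mathcal{L}$.

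There is no real obstacle here; the argument is a direct transcription of the classical proof that ordinary convergence implies statistical convergence, with the natural density replaced by the $\lambda$-density. The only point worth emphasising is the use of $\lambda_n \to \infty$, which is the sole property of $\lambda \in \Delta_\infty$ needed to make finite sets $\lambda$-negligible.
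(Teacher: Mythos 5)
Your proof is correct and is exactly the standard argument the authors have in mind: the paper itself omits the proof as trivial, and your reduction to the finiteness of the exceptional set $A(t)$ together with $\lambda_n\rightarrow\infty$ is the intended route. Nothing further is needed.
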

\begin{proof}
The proof is trivial, so is omitted.
\end{proof}
\begin{thm}
Let $(X,\mathcal{F},\tau)$ be a PM space,
$x=\{x_k\}_{k\in\mathbb{N}}$ be a sequence in $X$ and
$\lambda\in\Delta_{\infty}$. Then
$st_\lambda^\mathcal{F}$-$\lim\limits_{k\rightarrow\infty}x_k=\mathcal{L}$
if and only if there is a subset $G=\{q_1<q_2<...\}$ of
$\mathbb{N}$ such that $d_\lambda(G)=1$ and
$\mathcal{F}$-$\lim\limits_{n\rightarrow\infty}x_{q_n}=\mathcal{L}$.
\end{thm}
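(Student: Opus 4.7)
The plan is to prove this by the standard Sal\'at-type diagonal construction adapted to $\lambda$-density, using Remark 3.1 to reduce the problem to a convergence statement about the real sequence $d_{L}(\mathcal{F}_{x_{k}\mathcal{L}},\epsilon_{0})$. The converse is easy; the forward direction is where the work lies.

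For the converse, suppose $G=\{q_{1}<q_{2}<\cdots\}$ with $d_{\lambda}(G)=1$ and $\mathcal{F}\mbox{-}\lim x_{q_{n}}=\mathcal{L}$. Fix $t>0$ and pick $n_{0}$ so that $d_{L}(\mathcal{F}_{x_{q_{n}}\mathcal{L}},\epsilon_{0})<t$ for all $n\ge n_{0}$. Then the set $\{k\in\mathbb{N}:d_{L}(\mathcal{F}_{x_{k}\mathcal{L}},\epsilon_{0})\ge t\}$ is contained in $(\mathbb{N}\setminus G)\cup\{q_{1},\dots,q_{n_{0}-1}\}$. The first piece has $\lambda$-density $0$ (since $d_{\lambda}(G)=1$) and the second is finite, hence their union has $\lambda$-density $0$. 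Invoking Remark 3.1, this gives $st_{\lambda}^{\mathcal{F}}\mbox{-}\lim x_{k}=\mathcal{L}$.

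For the forward direction, assume $st_{\lambda}^{\mathcal{F}}\mbox{-}\lim x_{k}=\mathcal{L}$. For each $r\in\mathbb{N}$ set $K_{r}=\{k\in\mathbb{N}:d_{L}(\mathcal{F}_{x_{k}\mathcal{L}},\epsilon_{0})\ge 1/r\}$; by Remark 3.1 each $K_{r}$ has $\lambda$-density $0$, and clearly $K_{1}\subset K_{2}\subset\cdots$. Choose a strictly increasing sequence $1=n_{1}<n_{2}<\cdots$ of positive integers such that $|K_{r}\cap I_{n}|/\lambda_{n}<1/r$ whenever $n\ge n_{r}$. Define
\[
G=\bigcup_{r=1}^{\infty}\bigl(M_{r}\cap[n_{r},n_{r+1})\bigr),\qquad M_{r}:=\mathbb{N}\setminus K_{r},
\]
and enumerate $G=\{q_{1}<q_{2}<\cdots\}$. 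By construction, if $k\in G$ and $k\ge n_{r}$ then $k\in M_{s}$ for some $s\ge r$, hence $d_{L}(\mathcal{F}_{x_{k}\mathcal{L}},\epsilon_{0})<1/s\le 1/r$; letting $r\to\infty$ yields $\mathcal{F}\mbox{-}\lim_{n\to\infty}x_{q_{n}}=\mathcal{L}$.

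It remains to verify $d_{\lambda}(G)=1$, which I expect to be the only delicate step. For $n$ with $n_{r}\le n<n_{r+1}$, the key observation is that $I_{n}\setminus G\subset I_{n}\cap\bigcup_{s=1}^{r}K_{s}=I_{n}\cap K_{r}$, because the union is nested and $I_{n}\subset[1,n_{r+1})$. Consequently $|I_{n}\setminus G|/\lambda_{n}\le|I_{n}\cap K_{r}|/\lambda_{n}<1/r$, so $|G\cap I_{n}|/\lambda_{n}>1-1/r$, and since $r\to\infty$ as $n\to\infty$ we conclude $d_{\lambda}(G)=1$. The main obstacle is precisely this containment argument, where one must exploit the nesting of the $K_{r}$ together with the placement of the break-points $n_{r}$ to dominate $I_{n}\setminus G$ by a single set $K_{r}$ of controllable $\lambda$-measure on $I_{n}$.
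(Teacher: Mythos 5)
Your proof is correct and follows essentially the same route as the paper's: the paper's construction takes the nested good sets $G_t=\{q: d_L(\mathcal{F}_{x_q\mathcal{L}},\epsilon_0)<1/t\}$, chooses break-points $u_t$ past which $|G_t\cap I_n|/\lambda_n>(t-1)/t$, and glues $G_t$ over $[u_t,u_{t+1}]$, which is exactly your construction phrased in terms of the complements $M_r=\mathbb{N}\setminus K_r$. Your containment $I_n\setminus G\subset I_n\cap K_r$ is the complemented form of the paper's inequality $|G(n)|\geq|G_t(n)|$, and the converse direction is handled identically in both arguments.
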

\begin{proof}
Let us assume that
$st^\mathcal{F}_\lambda\mbox{-}\lim\limits_{k\rightarrow\infty}x_k=\mathcal{L}$.
Then for each $t\in\mathbb{N}$, let
$$E_t=\{q\in \mathbb{N}:d_L(\mathcal{F}_{x_q\mathcal{L}},\epsilon_0)\geq\frac{1}{t}\}$$
and
$$G_t=\{q\in \mathbb{N}:d_L(\mathcal{F}_{x_q\mathcal{L}},\epsilon_0)<\frac{1}{t}\}.$$

Then from Remark 3.1, we get $d_\lambda(E_t)=0$. Also by
construction of $G_t$ for each $t\in \mathbb{N}$ we have
$G_1\supset G_2\supset G_3\supset...\supset G_m\supset
G_{m+1}\supset...$ with $d_\lambda(G_t)=1$ for each
$t\in\mathbb{N}$.

Let $u_1\in G_1$. As $d_\lambda(G_2)=1$, so $\exists ~u_2\in G_2$
with $u_2>u_1$ such that for each $n\geq u_2$,
$\frac{\left|G_2(n)\right|}{\lambda_n}>\frac{1}{2}$ where
$G_t(n)=\{k\in I_n: k\in G_t\}$ and $\left|G_t(n)\right|$ is the
number of element in the set $G_t(n)$ for each $t\in \mathbb{N}$.

Again, as $d_\lambda(G_3)=1$, so $\exists~u_3\in G_3$ with
$u_3>u_2$ such that for each $n\geq u_3$,
$\frac{\left|G_3(n)\right|}{\lambda_n}>\frac{2}{3}$.

Thus we set a strictly increasing sequence
$\{u_t\}_{t\in\mathbb{N}}$ of positive integers such that $u_t\in
G_t$ for each $t\in\mathbb{N}$ and
$$\frac{\left|G_t(n)\right|}{\lambda_n}>\frac{t-1}{t} \hspace{1in} \text{for each}~n\geq u_t, t\in\mathbb{N}.$$

We now define the set $G$ as follows
$$G=\biggl\{k\in\mathbb{N}:k\in[1,u_1]\biggr\}\bigcup\biggl\{\bigcup\limits_{t\in\mathbb{N}}\{k\in\mathbb{N}:k\in[u_t,u_{t+1}]~\text{and}~k\in G_t\}\biggr\}.$$

Then, for each $n$, $u_t\leq n<u_{t+1}$, we have

$$\frac{\left|G(n)\right|}{\lambda_n}\geq\frac{\left|G_t(n)\right|}{\lambda_n}>\frac{t-1}{t}.$$

Therefore $d_\lambda(G)=1$.

Let $\eta>0$. We choose $l\in\mathbb{N}$ such that
$\frac{1}{l}<\eta$. Let $n\geq u_l$, $n\in G$. Then $\exists$ a
natural number $r\geq l$ such that $u_r\leq n<u_{r+1}$. Then by
the construction of $G$, $n\in G_r$. So,
$$d_L(\mathcal{F}_{x_n\mathcal{L}},\epsilon_0)<\frac{1}{r}\leq\frac{1}{l}<\eta.$$

Thus $d_L(\mathcal{F}_{x_n\mathcal{L}},\epsilon_0)<\eta$ for each
$n\in G,~n\geq u_l$. Hence
$\mathcal{F}\mbox{-}\lim\limits_{\stackrel{\stackrel{k\rightarrow\infty}{k\in
G}}~}x_k=\mathcal{L}$. Writing $G=\{q_1<q_2<...\}$ we have
$d_\lambda(G)=1$ and
$\mathcal{F}\mbox{-}\lim\limits_{n\rightarrow\infty}x_{q_n}=\mathcal{L}$.

Conversely, let there exists a subset $G=\{q_1<q_2<...\}$ of
$\mathbb{N}$ such that $d_\lambda(G)=1$ and
$\mathcal{F}\mbox{-}\lim\limits_{n\rightarrow\infty}x_{q_n}=\mathcal{L}(\in
X)$. Then for each $t>0$, there is an $N_0\in\mathbb{N}$ so that
$$ \mathcal{F}_{x_{q_n}\mathcal{L}}(t)>1-t, \hspace{1in} \forall~ n\geq N_0,$$
i.e.,
$$d_L(\mathcal{F}_{x_{q_n}\mathcal{L}},\epsilon_0)<t, \hspace{1in} \forall~ n\geq N_0.$$
Let $r>0$ be a real number and $E_r=\{n\in
\mathbb{N}:d_L(\mathcal{F}_{x_{q_n}\mathcal{L}},\epsilon_0)\geq
r\}$. Then $E_r\subset\mathbb{N} \setminus \{q_{ _{N_0+1}},q_{
_{N_0+2}},...\}$. Now $d_\lambda(\mathbb{N} \setminus \{q_{
_{N_0+1}}, q_{ _{N_0+2}},...\}) = 0$ and so $d_\lambda(E_r)=0$.
Therefore,
$st^\mathcal{F}_\lambda\mbox{-}\lim\limits_{k\rightarrow\infty}x_k=\mathcal{L}$.
\end{proof}
\begin{thm}
Let $(X,\mathcal{F},\tau)$ be a PM space, $x=\{x_k\}_{k \in
\mathbb{N}}$ be a sequence in $X$ and $\lambda\in\Delta_\infty$.
Then $x_k\xrightarrow{st^\mathcal{F}_{\lambda}}\mathcal{L}$ if and
only if there exists a sequence $\{g_k\}_{k\in\mathbb{N}}$ such
that $x_k=g_k$ for $\lambda\mbox{-}a.a.k.$ and
$g_k\xrightarrow{\mathcal{F}}\mathcal{L}$.
\end{thm}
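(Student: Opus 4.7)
The plan is to leverage Theorem 3.3 directly; that theorem already produces a subset $G\subset\mathbb{N}$ with $d_\lambda(G)=1$ along which $\{x_k\}$ converges strongly to $\mathcal{L}$, and the current statement simply repackages that information as a single globally defined sequence $\{g_k\}$ that agrees with $\{x_k\}$ on a set of $\lambda$-density one.

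For the forward direction, assume $x_k\xrightarrow{st^\mathcal{F}_\lambda}\mathcal{L}$. By Theorem 3.3 there is a set $G=\{q_1<q_2<\cdots\}\subset\mathbb{N}$ with $d_\lambda(G)=1$ and $\mathcal{F}\mbox{-}\lim_{n\to\infty}x_{q_n}=\mathcal{L}$. I would define
$$g_k=\begin{cases}x_k, & k\in G,\\ \mathcal{L}, & k\notin G.\end{cases}$$
Since $\{k\in\mathbb{N}:x_k=g_k\}\supseteq G$, we have $x_k=g_k$ for $\lambda\mbox{-}a.a.k.$ To verify $g_k\xrightarrow{\mathcal{F}}\mathcal{L}$, fix $t>0$ and choose $N_0$ so that $\mathcal{F}_{x_{q_n}\mathcal{L}}(t)>1-t$ for all $n\geq N_0$. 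For every $k\geq q_{N_0}$, either $k\in G$ (so $k=q_n$ for some $n\geq N_0$ and $g_k=x_{q_n}\in\mathcal{N}_\mathcal{L}(t)$) or $k\notin G$ (so $g_k=\mathcal{L}\in\mathcal{N}_\mathcal{L}(t)$). Thus $g_k\xrightarrow{\mathcal{F}}\mathcal{L}$.

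For the converse, suppose $g_k\xrightarrow{\mathcal{F}}\mathcal{L}$ and $A:=\{k\in\mathbb{N}:x_k\neq g_k\}$ satisfies $d_\lambda(A)=0$. For any $t>0$, the strong convergence of $\{g_k\}$ makes $B_t:=\{k\in\mathbb{N}:d_L(\mathcal{F}_{g_k\mathcal{L}},\epsilon_0)\geq t\}$ finite, hence $d_\lambda(B_t)=0$. Since $x_k=g_k$ off $A$, we have the inclusion
$$\{k\in\mathbb{N}:d_L(\mathcal{F}_{x_k\mathcal{L}},\epsilon_0)\geq t\}\subset A\cup B_t,$$
and the subadditivity of $d_\lambda$ on density-zero sets (recorded in the introduction) yields $d_\lambda(\{k\in\mathbb{N}:d_L(\mathcal{F}_{x_k\mathcal{L}},\epsilon_0)\geq t\})=0$. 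By Remark 3.1 this gives $x_k\xrightarrow{st^\mathcal{F}_\lambda}\mathcal{L}$.

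The only delicate point is the forward direction: one must extend the convergent ``sparse'' sequence $\{x_{q_n}\}$ to an ordinarily strongly convergent sequence on all of $\mathbb{N}$ without disturbing its limiting behaviour. Filling in the complement of $G$ with the constant value $\mathcal{L}$ handles this trivially, since a value equal to the limit cannot introduce a new tail point outside $\mathcal{N}_\mathcal{L}(t)$. The remainder of the argument is bookkeeping with $\lambda$-densities.
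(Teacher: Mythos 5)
Your proof is correct and takes the same route as the paper, whose entire proof of this statement is ``Follows from Theorem 3.4'' --- the subset-$G$ characterization you invoke (and cite, with an off-by-one label, as Theorem 3.3). You have simply supplied the details the paper omits: setting $g_k=\mathcal{L}$ off $G$ and checking ordinary strong convergence for the forward direction, and using the inclusion $\{k\in\mathbb{N}:d_L(\mathcal{F}_{x_k\mathcal{L}},\epsilon_0)\geq t\}\subset A\cup B_t$ together with Remark 3.1 for the converse.
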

\begin{proof}
Follows from Theorem 3.4.
\end{proof}
\begin{defn}
Let $(X,\rho)$ be a metric space and $x=\{x_k\}_{k\in\mathbb{N}}$
be a sequence in $X$. Then $x$ is said to be
$\lambda$-statistically Cauchy in $X$ if for every $\eta>0$, there
exists a natural number $N_0$ such that
$$d_\lambda(\{k\in\mathbb{N}:\rho(x_k,x_{N_0})\geq\eta\})=0.$$
\end{defn}

Now as a consequence of the proposition 4., of \cite{De1}, we get
the following lemma.
\begin{lem}
Let $(X,\rho)$ be a metric space and $x=\{x_k\}_{k\in\mathbb{N}}$
be a sequence in $X$. Then the following statements are
equivalent:
\begin{enumerate}
    \item $x$ is a $\lambda$-statistically Cauchy sequence.
    \item  For all $\eta>0$, there is a set $G\subset\mathbb{N}$ such that $d_\lambda(G)=0$ and $\rho(x_m,x_n)<\eta$ for all $m,n\notin G$.
    \item For every $\eta>0$, $d_\lambda(\{j\in\mathbb{N}:d_\lambda(D_j)\neq 0\})=0$, where $D_j(\eta)=\{k\in\mathbb{N}:\rho(x_k,x_j)\geq \eta\}$, $j\in\mathbb{N}$.
\end{enumerate}
\end{lem}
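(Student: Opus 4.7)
The plan is to establish the cycle $(1) \Rightarrow (2) \Rightarrow (3) \Rightarrow (1)$. All three implications reduce to routine applications of the triangle inequality together with the fact that a countable union of two $\lambda$-density-zero sets has $\lambda$-density zero; the one mildly subtle point occurs in the last step, where one must exploit that $d_\lambda(\mathbb{N}) = 1$ in order to pick a witness outside a $\lambda$-null set.

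For $(1) \Rightarrow (2)$, given $\eta > 0$, I would apply (1) with parameter $\eta/2$ to produce $N_0$ with $G := \{k \in \mathbb{N} : \rho(x_k, x_{N_0}) \geq \eta/2\}$ satisfying $d_\lambda(G) = 0$. The triangle inequality then gives $\rho(x_m, x_n) \leq \rho(x_m, x_{N_0}) + \rho(x_{N_0}, x_n) < \eta$ for every $m, n \notin G$, which is exactly (2).

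For $(2) \Rightarrow (3)$, given $\eta > 0$, apply (2) with $\eta/2$ to obtain $G$ with $d_\lambda(G) = 0$ and $\rho(x_m, x_n) < \eta/2$ for all $m, n \notin G$. For any $j \notin G$, if $k \notin G$ then $\rho(x_k, x_j) < \eta/2 < \eta$, so $D_j(\eta) \subseteq G$ and therefore $d_\lambda(D_j(\eta)) = 0$. This shows $\{j \in \mathbb{N} : d_\lambda(D_j(\eta)) \neq 0\} \subseteq G$, and monotonicity of $\lambda$-density yields (3).

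For $(3) \Rightarrow (1)$, fix $\eta > 0$ and let $S := \{j \in \mathbb{N} : d_\lambda(D_j(\eta)) \neq 0\}$. By (3), $d_\lambda(S) = 0$, so $d_\lambda(S^c) = 1$; in particular $S^c$ is nonempty (otherwise $S = \mathbb{N}$ would force $d_\lambda(S) = 1$, a contradiction). Picking any $N_0 \in S^c$ gives $d_\lambda(\{k : \rho(x_k, x_{N_0}) \geq \eta\}) = d_\lambda(D_{N_0}(\eta)) = 0$, which is (1). The main obstacle, such as it is, lies in this last implication: one must remember that the $\lambda$-null set $S$ cannot exhaust $\mathbb{N}$, so a suitable $N_0$ exists. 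Since the result is stated as a consequence of a proposition in \cite{De1}, one could alternatively cite that proposition and merely observe that the arguments transfer verbatim from natural density to $\lambda$-density.
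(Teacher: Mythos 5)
Your proof is correct; the paper itself omits the argument as ``trivial'' (deferring to Proposition 4 of \cite{De1}), and your cyclic chain $(1)\Rightarrow(2)\Rightarrow(3)\Rightarrow(1)$ is precisely the standard argument that fills that gap, transferred from natural density to $\lambda$-density. Every step checks out, including the key observation in $(3)\Rightarrow(1)$ that the $\lambda$-null set $S$ cannot exhaust $\mathbb{N}$, so a witness $N_0\in S^c$ exists.
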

\begin{proof}
The proof is trivial, so is omitted.
\end{proof}
\begin{defn}
Let $(X,\mathcal{F},\tau)$ be a PM space, $x=\{x_k\}_{k \in
\mathbb{N}}$ be a sequence in $X$ and $\lambda\in\Delta_\infty$.
Then $x$ is said to be strong $\lambda$-statistically Cauchy
sequence if for every $t>0$, $\exists$ a natural number $N_0$
depending on $t$ such that
$$d_\lambda(\{k\in \mathbb{N}: \mathcal{F}_{x_kx_{N_0}}(t)\leq 1-t\})=0.$$
\end{defn}
\begin{thm}
Let $(X,\mathcal{F},\tau)$ be a PM space, $x=\{x_k\}_{k \in
\mathbb{N}}$ be a sequence in $X$ and $\lambda\in\Delta_\infty$.
If $x$ is strongly $\lambda$-statistically convergent, then $x$ is
strong $\lambda$-statistically Cauchy.
\end{thm}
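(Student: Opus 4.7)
The plan is to imitate the classical metric-space argument that convergent sequences are Cauchy, using the approximate triangle inequality supplied by Note 2.1 in place of the usual one, and transferring everything from $\mathcal{F}_{xy}(t) > 1-t$ to the $d_L$ formulation via Theorem 2.1.

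First I would fix $t > 0$ and, via Note 2.1, pick $\eta > 0$ (with $\eta < t$) so that $d_L(\mathcal{F}_{ac}, \epsilon_0) < \eta$ and $d_L(\mathcal{F}_{cb}, \epsilon_0) < \eta$ force $d_L(\mathcal{F}_{ab}, \epsilon_0) < t$. Let $\mathcal{L}$ be the strong $\lambda$-statistical limit of $x$. By Remark 3.1 the set
$$A(\eta) = \{k \in \mathbb{N} : d_L(\mathcal{F}_{x_k \mathcal{L}}, \epsilon_0) \geq \eta\}$$
has $\lambda$-density zero, so $A(\eta)^c$ has $\lambda$-density $1$; in particular it is nonempty, and I choose any $N_0 \in A(\eta)^c$.

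Next I would verify the Cauchy condition for this $N_0$. For every $k \in A(\eta)^c$ both $d_L(\mathcal{F}_{x_k \mathcal{L}}, \epsilon_0) < \eta$ and $d_L(\mathcal{F}_{\mathcal{L} x_{N_0}}, \epsilon_0) < \eta$ hold, so the choice of $\eta$ gives $d_L(\mathcal{F}_{x_k x_{N_0}}, \epsilon_0) < t$, which by Theorem 2.1 means $\mathcal{F}_{x_k x_{N_0}}(t) > 1-t$. Consequently
$$\{k \in \mathbb{N} : \mathcal{F}_{x_k x_{N_0}}(t) \leq 1-t\} \subseteq A(\eta),$$
and since the right-hand side has $\lambda$-density $0$ and $\lambda$-density is monotone under inclusion, the left-hand side also has $\lambda$-density $0$. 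This is exactly the strong $\lambda$-statistical Cauchy condition of Definition 3.3.

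There is no real obstacle: the only non-routine step is locating a single index $N_0$ that plays well with the almost-triangle inequality from Note 2.1, and this is handled automatically by the fact that $A(\eta)^c$ has $\lambda$-density $1$ and hence is nonempty. Everything else is just bookkeeping between the $\mathcal{F}_{xy}(t) > 1-t$ formulation and the $d_L$ formulation via Theorem 2.1.
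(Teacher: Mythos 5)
Your proof is correct and is precisely the routine argument the paper omits as ``trivial'': fix $t>0$, use Note 2.1 to get $\eta$, pick any $N_0$ in the density-one set $A(\eta)^c$, and observe that $\{k\in\mathbb{N}:\mathcal{F}_{x_kx_{N_0}}(t)\leq 1-t\}\subseteq A(\eta)$, which has $\lambda$-density zero by Remark 3.1 and the monotonicity of null $\lambda$-density. Nothing is missing; the choice of $N_0$ from the nonempty set $A(\eta)^c$ and the symmetry $\mathcal{F}_{\mathcal{L}x_{N_0}}=\mathcal{F}_{x_{N_0}\mathcal{L}}$ are exactly the points that need to be (and are) handled.
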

\begin{proof}
The proof is trivial, so is omitted.
\end{proof}
\begin{thm}
Let $(X,\mathcal{F},\tau)$ be a PM space, $x=\{x_k\}_{k \in
\mathbb{N}}$ be a sequence in $X$ and $\lambda\in\Delta_\infty$.
If the sequence $x=\{x_k\}_{k \in \mathbb{N}}$ is strong
$\lambda$-statistically Cauchy, then for each $t>0$, there is a
set $H_t\subset \mathbb{N}$ with $d_\lambda(H_t)=0$ such that
$\mathcal{F}_{x_kx_j}(t)>1-t$ for any $k,j\notin H_t$.
\end{thm}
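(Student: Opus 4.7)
The plan is to exploit the continuity of $\tau$, as repackaged in Note 2.1, to convert the one-sided closeness to the pivot point $x_{N_0}$ guaranteed by strong $\lambda$-statistical Cauchyness into the pairwise closeness demanded by the conclusion. In other words, Cauchyness gives control of $\mathcal{F}_{x_k x_{N_0}}$ for $\lambda$-almost all $k$, and the triangle-type property of $\tau$ is what lets us pair two such estimates up to obtain $\mathcal{F}_{x_k x_j}(t) > 1-t$.

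First, fix $t > 0$. I would invoke Note 2.1 to produce an $\eta \in (0, t)$ such that whenever $\mathcal{F}_{ac}(\eta) > 1 - \eta$ and $\mathcal{F}_{cb}(\eta) > 1 - \eta$, one has $\mathcal{F}_{ab}(t) > 1 - t$; equivalently, in the $d_L$ formulation, $d_L(\mathcal{F}_{ac},\epsilon_0) < \eta$ and $d_L(\mathcal{F}_{cb},\epsilon_0) < \eta$ imply $d_L(\mathcal{F}_{ab},\epsilon_0) < t$. It is important to note that this $\eta$ depends only on $t$, not on any index of the sequence.

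Next, I would apply the hypothesis that $x$ is strong $\lambda$-statistically Cauchy to this chosen $\eta$. This produces a natural number $N_0 = N_0(\eta)$ such that the set
\[
H_t := \{k \in \mathbb{N} : \mathcal{F}_{x_k x_{N_0}}(\eta) \leq 1 - \eta\}
\]
satisfies $d_\lambda(H_t) = 0$. This $H_t$ will be the desired exceptional set. For any indices $k, j \notin H_t$, we have both $\mathcal{F}_{x_k x_{N_0}}(\eta) > 1 - \eta$ and $\mathcal{F}_{x_j x_{N_0}}(\eta) > 1 - \eta$, and by the symmetry axiom (3) of the PM space, the latter equals $\mathcal{F}_{x_{N_0} x_j}(\eta) > 1 - \eta$. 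Applying the implication from Note 2.1 with $a = x_k$, $c = x_{N_0}$, $b = x_j$ then yields $\mathcal{F}_{x_k x_j}(t) > 1 - t$, completing the argument.

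The only real subtlety is the correct choice of the auxiliary radius $\eta$: Cauchyness has to be applied at the level $\eta$ rather than $t$ so that two $\eta$-good pairs can be composed through Theorem 2.2 to produce a $t$-good pair. Once this is seen, the remainder is a bookkeeping step using symmetry of $\mathcal{F}$ and the fact that $d_\lambda(H_t) = 0$ is inherited directly from the definition of strong $\lambda$-statistical Cauchyness.
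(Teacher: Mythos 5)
Your proposal is correct and is essentially identical to the paper's own proof: the paper also fixes $t>0$, uses Note 2.1 to obtain an auxiliary level $\gamma$ (your $\eta$), applies the Cauchy hypothesis at level $\gamma$ to get $N_0$ and the null set $H_t=\{k\in\mathbb{N}:\mathcal{F}_{x_kx_{N_0}}(\gamma)\leq 1-\gamma\}$, and then composes the two estimates through $x_{N_0}$. No differences worth noting.
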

\begin{proof}
Let $x=\{x_k\}_{k \in \mathbb{N}}$ be strong
$\lambda$-statistically Cauchy. Let $t>0$. Then by Note 2.1, there
is a $\gamma=\gamma(t)>0$ such that,
$$\mathcal{F}_{\mathcal{L}r}(t)>1-t ~\text{whenever}~ \mathcal{F}_{\mathcal{L}j}(\gamma)>1-\gamma ~\text{and}~ \mathcal{F}_{jr}(\gamma)>1-\gamma.$$

As the sequence $x=\{x_k\}_{k \in \mathbb{N}}$ is strong
$\lambda$-statistically Cauchy, so there is an
$N_0=N_0(\gamma)\in\mathbb{N}$ such that
$$d_\lambda(\{k\in \mathbb{N}:\mathcal{F}_{x_kx_{N_0}}(\gamma)\leq 1-\gamma\})=0.$$
Let $H_t=\{k\in \mathbb{N}:\mathcal{F}_{x_kx_{N_0}}(\gamma)\leq
1-\gamma\}$. Then $d_\lambda(H_t)=0$ and
$\mathcal{F}_{x_kx_{N_0}}(\gamma)>1-\gamma$ and
$\mathcal{F}_{x_jx_{N_0}}(\gamma)>1-\gamma$ for $k,j\notin H_t$.
Hence for every $t>0$, there is a set $H_t\subset\mathbb{N}$ with
$d_\lambda(H_t)=0$ such that $\mathcal{F}_{x_kx_j}(t)>1-t$ for
every $k,j\notin H_t$.
\end{proof}
\begin{cor}
Let $(X,\mathcal{F},\tau)$ be a PM space, $x=\{x_k\}_{k \in
\mathbb{N}}$ be a sequence in $X$ and $\lambda\in\Delta_\infty$.
If the sequence $x=\{x_k\}_{k \in \mathbb{N}}$ is strong
$\lambda$-statistically Cauchy, then for each $t>0$, there is a
set $G_t\subset \mathbb{N}$ with $d_\lambda(G_t)=1$ such that
$\mathcal{F}_{x_kx_j}(t)>1-t$ for any $k,j\in G_t$.
\end{cor}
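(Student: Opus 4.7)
The plan is to deduce the corollary directly from the preceding Theorem 3.7 by passing to complements. Theorem 3.7 produces, for each $t>0$, a set $H_t\subset\mathbb{N}$ with $d_\lambda(H_t)=0$ such that $\mathcal{F}_{x_kx_j}(t)>1-t$ for every pair $k,j\notin H_t$. The natural candidate is therefore $G_t:=\mathbb{N}\setminus H_t$.

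First I would fix $t>0$ and invoke Theorem 3.7 to obtain the set $H_t$. Setting $G_t:=\mathbb{N}\setminus H_t$, the $\lambda$-density condition $d_\lambda(G_t)=1$ follows from the property of $\lambda$-density recorded in the introduction: if $A\subset\mathbb{N}$ satisfies $d_\lambda(A)=0$, then $d_\lambda(A^c)=1$. Thus $d_\lambda(G_t)=d_\lambda(H_t^c)=1$.

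Next, for the required inequality, I would observe that membership $k,j\in G_t$ is by construction equivalent to $k,j\notin H_t$. Applying the conclusion of Theorem 3.7 to any such pair gives $\mathcal{F}_{x_kx_j}(t)>1-t$, which is exactly what is asked. Since $t>0$ was arbitrary, this completes the argument.

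There is no real obstacle here; the corollary is essentially a restatement of Theorem 3.7 in the complementary formulation, relying only on the $\lambda$-density identity $d_\lambda(A)+d_\lambda(A^c)=1$ whenever one side is known. The only point warranting a line of care is making sure the implication "$k,j\in G_t\Rightarrow k,j\notin H_t$" is invoked symmetrically in both indices, which is automatic from the definition of $G_t$ as the complement.
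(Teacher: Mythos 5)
Your proposal is correct and is exactly the argument the paper intends: the corollary is stated without proof precisely because it follows from the preceding theorem by taking $G_t=\mathbb{N}\setminus H_t$ and using that $d_\lambda(H_t)=0$ implies $d_\lambda(H_t^c)=1$. Nothing further is needed.
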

\begin{thm}
Let $(X,\mathcal{F},\tau)$ be a PM space, $x=\{x_k\}_{k \in
\mathbb{N}}$, $g=\{g_k\}_{k\in\mathbb{N}}$ be two strong
$\lambda$-statistically Cauchy sequences in $X$ and
$\lambda\in\Delta_\infty$. Then
$\{\mathcal{F}_{{x_k}{g_k}}\}_{k\in\mathbb{N}}$ is a
$\lambda$-statistically Cauchy sequence in $(\mathcal{D}^+,d_L)$.
\end{thm}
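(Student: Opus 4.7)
The plan is to exploit the uniform continuity of $\mathcal{F}: X \times X \to \mathcal{D}^+$ (which, as noted in the proof of Theorem 3.2, follows from the continuity of $\tau$ together with the strong topology on $X$), combined with the ``simultaneously close'' characterization of strong $\lambda$-statistical Cauchyness given by Theorem 3.7 above. In detail, given $\eta>0$ I will first choose $\delta>0$ so that
\[
d_L(\mathcal{F}_{x_k x_j},\epsilon_0)<\delta \text{ and } d_L(\mathcal{F}_{g_k g_j},\epsilon_0)<\delta \Longrightarrow d_L(\mathcal{F}_{x_k g_k},\mathcal{F}_{x_j g_j})<\eta,
\]
which is exactly the uniform continuity of $\mathcal{F}$ applied at the pairs $(x_k,g_k)$ and $(x_j,g_j)$.

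Next I would apply Theorem 3.7 to each of the strong $\lambda$-statistically Cauchy sequences $\{x_k\}$ and $\{g_k\}$ at the level $\delta$: this produces sets $H_1,H_2 \subset \mathbb{N}$, both of $\lambda$-density zero, such that $\mathcal{F}_{x_k x_j}(\delta)>1-\delta$ for all $k,j\notin H_1$ and $\mathcal{F}_{g_k g_j}(\delta)>1-\delta$ for all $k,j\notin H_2$. By Theorem 2.1 these inequalities translate to $d_L(\mathcal{F}_{x_k x_j},\epsilon_0)<\delta$ and $d_L(\mathcal{F}_{g_k g_j},\epsilon_0)<\delta$ respectively. Setting $H=H_1\cup H_2$, we still have $d_\lambda(H)=0$, and the choice of $\delta$ forces $d_L(\mathcal{F}_{x_k g_k},\mathcal{F}_{x_j g_j})<\eta$ for all $k,j\notin H$.

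To finish, since $d_\lambda(H)=0$ implies $\mathbb{N}\setminus H$ is nonempty, I pick any $N_0\in\mathbb{N}\setminus H$. Then
\[
\{k\in\mathbb{N}:d_L(\mathcal{F}_{x_k g_k},\mathcal{F}_{x_{N_0} g_{N_0}})\geq \eta\}\subset H,
\]
so this set has $\lambda$-density zero, which is precisely Definition 3.2 of $\lambda$-statistical Cauchyness in the metric space $(\mathcal{D}^+,d_L)$. The main (really the only) obstacle is the first step: correctly invoking uniform continuity of $\mathcal{F}$ to bound $d_L(\mathcal{F}_{x_k g_k},\mathcal{F}_{x_j g_j})$ in terms of the diagonal distances $d_L(\mathcal{F}_{x_k x_j},\epsilon_0)$ and $d_L(\mathcal{F}_{g_k g_j},\epsilon_0)$; after that, the conclusion is essentially a bookkeeping combination of Theorem 3.7 with the subadditivity of $\lambda$-density zero sets.
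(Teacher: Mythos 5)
Your proposal is correct and follows essentially the same route as the paper's proof: both arguments combine the uniform continuity of $\mathcal{F}$ with the ``simultaneously close off a $\lambda$-density-zero set'' characterization of strong $\lambda$-statistical Cauchyness (you use the density-zero sets of Theorem 3.8 and take their union, while the paper uses the complementary density-one sets of Corollary 3.9 and intersects them, which is the same thing). The only cosmetic difference is that you finish by explicitly choosing $N_0\notin H$ to verify Definition 3.2 directly, whereas the paper delegates that last step to Lemma 3.6.
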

\begin{proof}
As $x=\{x_k\}_{k \in \mathbb{N}}$ and $g=\{g_k\}_{k\in\mathbb{N}}$
are strong $\lambda$-statistically Cauchy sequences, so by
corollary 3.9, for every $\gamma>0$ there are $U_\gamma,
V_\gamma\subset\mathbb{N}$ with
$d_\lambda(U_\gamma)=d_\lambda(V_\gamma)=1$ so that
$\mathcal{F}_{x_qx_j}(\gamma)>1-\gamma$ holds for any $q,j\in
U_\gamma$ and $\mathcal{F}_{g_sg_t}(\gamma)>1-\gamma$ holds for
any $s,t\in V_\gamma$. Let $W_\gamma=U_\gamma\cap V_\gamma$. Then
$d_\lambda(W_\gamma)=1$. So, for every $\gamma>0$, there is a set
$W_\gamma\subset\mathbb{N}$ with $d_\lambda(W_\gamma)=1$ so that
$\mathcal{F}_{x_px_r}(\gamma)>1-\gamma$ and
$\mathcal{F}_{g_pg_r}(\gamma)>1-\gamma$ for any $p,r\in W_\gamma$.
Now let $t>0$. Then there exists a $\gamma(t)$ and hence a set
$W_\gamma=W_t\subset\mathbb{N}$ with $d_\lambda(W_t)=1$ so that
$d_L(\mathcal{F}_{x_pg_p},\mathcal{F}_{x_rg_r})<t$ for any $p,r\in
W_t$, as $\mathcal{F}$ is uniformly continuous. Then the result
follows from Lemma 3.6.
\end{proof}

\section{\textbf{Strong $\lambda$-Statistical Limit Points and Strong $\lambda$-Statistical Cluster Points}}

In this section we introduce the notions of strong
$\lambda$-statistical limit points and strong
$\lambda$-statistical cluster points of a sequence in a PM space
and study some of their properties.
\begin{defn}
Let $(X,\mathcal{F},\tau)$ be a PM space, $x=\{x_k\}_{k
\in\mathbb{N}}$ be a sequence in $X$ and
$\lambda\in\Delta_\infty$. If $\{x_{q_k}\}_{k\in\mathbb{N}}$ is a
subsequence of the sequence $x$ and
$\mathcal{B}=\{q_k:k\in\mathbb{N}\}$ then we denote
$\{x_{q_k}\}_{k\in\mathbb{N}}$ by $\{x\}_\mathcal{B}$. Now if
$d_\lambda(\mathcal{B}) = 0$, then $\{x\}_\mathcal{B}$ is called a
$\lambda$-thin subsequence of $x$. On the other hand,
$\{x\}_\mathcal{B}$ is called a $\lambda$-nonthin subsequence of
$x$, if $\mathcal{B}$ does not have $\lambda$-density zero i.e.,
if either $d_\lambda(\mathcal{B})$ is a positive number or
$\mathcal{B}$ fails to have $\lambda$-density.
\end{defn}
\begin{defn}\cite{Se}
Let $(X,\mathcal{F},\tau)$ be a PM space, $x=\{x_k\}_{k
\in\mathbb{N}}$ be a sequence in $X$ and
$\lambda\in\Delta_\infty$. An element $l\in X$ is said to be a
strong limit point of the sequence $x$, if there exists a
subsequence of $x$ that strongly converges to $l$.
\end{defn}
To denote the set of all strong limit points of any sequence $x$
in a PM space $(X,\mathcal{F},\tau)$ we use the notation
$L_x^\mathcal{F}$.
\begin{defn}
Let $(X,\mathcal{F},\tau)$ be a PM space, $x=\{x_k\}_{k
\in\mathbb{N}}$ be a sequence in $X$ and
$\lambda\in\Delta_\infty$. An element $\mathcal{L}\in X$ is said
to be a strong $\lambda$-statistical limit point of the sequence
$x=\{x_k\}_{k\in\mathbb N}$, if there exists a $\lambda$-nonthin
subsequence of $x$ that strongly converges to $\mathcal{L}$.
\end{defn}
To denote the set of all strong $\lambda$-statistical limit points
of any sequence $x$ in a PM space $(X,\mathcal{F},\tau)$ we use
the notation $\Lambda_x^{st}(\lambda)^\mathcal{F}_{s}$.
\begin{defn}
Let $(X,\mathcal{F},\tau)$ be a PM space, $x=\{x_k\}_{k
\in\mathbb{N}}$ be a sequence in $X$ and
$\lambda\in\Delta_\infty$. An element $\mathcal{Y}\in X$ is said
to be a strong $\lambda$-statistical cluster point of the sequence
$x=\{x_k\}_{k\in\mathbb N}$, if for every $t>0$, the set $\{k
\in\mathbb N : \mathcal{F}_{x_k\mathcal{Y}}(t)>1-t \}$ does not
have $\lambda$-density zero.
\end{defn}
To denote the set of all strong $\lambda$-statistical cluster
points of any sequence $x$ in a PM space $(X,\mathcal{F},\tau)$ we
use the notation $\Gamma_x^{st}(\lambda)^\mathcal{F}_{s}$.
\begin{note}
If we choose $\lambda_n=n$ for all $n\in\mathbb{N}$, then strong
$\lambda$-statistical limit points and strong
$\lambda$-statistical cluster points coincide with strong
statistical limit points and strong statistical cluster points of
a sequence respectively in a PM space as introduced in \cite{Se}.
\end{note}
\begin{thm}
Let $(X,\mathcal{F},\tau)$ be a PM space, $x=\{x_k\}_{k
\in\mathbb{N}}$ be a sequence in $X$ and
$\lambda\in\Delta_\infty$. Then
$\Lambda_x^{st}(\lambda)^\mathcal{F}_{s} \subset
\Gamma_x^{st}(\lambda)^\mathcal{F}_{s}\subset L_x^\mathcal{F}$.
\end{thm}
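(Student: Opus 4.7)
The plan is to establish the two inclusions separately; both reduce to a careful unpacking of the definitions, combined with one small standing observation. The standing observation is that since $\lambda_n\to\infty$, every finite subset of $\mathbb{N}$ has $\lambda$-density zero, and consequently modifying a set by finitely many elements preserves both its $\lambda$-density and the non-existence thereof. I will use this in both inclusions.

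For the first inclusion $\Lambda_x^{st}(\lambda)^\mathcal{F}_{s}\subset\Gamma_x^{st}(\lambda)^\mathcal{F}_{s}$, I would fix $\mathcal{L}\in\Lambda_x^{st}(\lambda)^\mathcal{F}_{s}$ together with a $\lambda$-nonthin subsequence $\{x_{q_k}\}_{k\in\mathbb{N}}$, with index set $\mathcal{B}=\{q_k:k\in\mathbb{N}\}$, that strongly converges to $\mathcal{L}$. For a fixed $t>0$, strong convergence delivers an $N_0\in\mathbb{N}$ with $\mathcal{F}_{x_{q_k}\mathcal{L}}(t)>1-t$ for every $k\geq N_0$, so that $\{q_k:k\geq N_0\}\subset A_t:=\{j\in\mathbb{N}:\mathcal{F}_{x_j\mathcal{L}}(t)>1-t\}$. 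The standing observation shows that $\{q_k:k\geq N_0\}$ has the same $\lambda$-density status as $\mathcal{B}$, hence is not of $\lambda$-density zero. The monotonicity property of $\lambda$-density recalled in the introduction, applied in contrapositive form, then forces $A_t$ not to have $\lambda$-density zero either, placing $\mathcal{L}$ in $\Gamma_x^{st}(\lambda)^\mathcal{F}_{s}$.

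For the second inclusion $\Gamma_x^{st}(\lambda)^\mathcal{F}_{s}\subset L_x^\mathcal{F}$, I would fix $\mathcal{Y}\in\Gamma_x^{st}(\lambda)^\mathcal{F}_{s}$ and consider, for each $m\in\mathbb{N}$, the set $A_m:=\{k\in\mathbb{N}:\mathcal{F}_{x_k\mathcal{Y}}(1/m)>1-1/m\}$. By hypothesis $A_m$ does not have $\lambda$-density zero, and by the standing observation this forces $A_m$ to be infinite. Choose then $k_1<k_2<\cdots$ inductively with $k_m\in A_m$; by Theorem 2.1 we obtain $d_L(\mathcal{F}_{x_{k_m}\mathcal{Y}},\epsilon_0)<1/m$, so that $\{x_{k_m}\}$ strongly converges to $\mathcal{Y}$ and hence $\mathcal{Y}\in L_x^\mathcal{F}$.

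The main point to watch is conceptual rather than computational: one must remember that ``not having $\lambda$-density zero'' is strictly weaker than ``having positive $\lambda$-density,'' because the defining limit may fail to exist. The finite-modification observation is tailored to cover both possibilities uniformly (via a squeeze on $|\cdot|/\lambda_n$), so once it is in place the two inclusions are immediate and no further subtlety arises.
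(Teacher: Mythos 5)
Your proof is correct and follows essentially the same route as the paper: the first inclusion is the paper's argument (split off the finite initial segment of the nonthin index set and use monotonicity of $\lambda$-density in contrapositive), and for the second inclusion you supply the standard diagonal selection $k_m\in A_m$ that the paper merely asserts. The only difference is that you make explicit the finite-modification observation and the construction of the strongly convergent subsequence, both of which the paper leaves implicit; no gap remains.
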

\begin{proof}
Let $\xi \in \Lambda_x^{st}(\lambda)^\mathcal{F}_{s}$. Then we get
a subsequence $\{x_{k_n}\}_{n\in \mathbb N}$ of the  sequence $x$
such that $\mathcal{F}\mbox{-}\lim\limits_{n\rightarrow
\infty}x_{k_n}=\xi$ and $d_\lambda(\mathcal{M})\neq 0$, where
$\mathcal{M} = \{k_n\in\mathbb {N}: n\in\mathbb {N}\}$. Suppose
$t>0$ be arbitrary. Since
$\mathcal{F}\mbox{-}\lim\limits_{n\rightarrow \infty}x_{k_n}=\xi$,
so $\exists~ p_0\in\mathbb N$ such that
$\mathcal{F}_{x_{k_n}\xi}(t)>1-t~\text{whenever}~ n\geq p_0$. Let
$\mathcal{B}=\{k_1,k_2,...,k_{p_0-1}\}$. Then,
\begin{eqnarray*}
&~&
\{k\in\mathbb{N}:\mathcal{F}_{x_{k}\xi}(t)>1-t\}\supset \{k_n\in\mathbb{N}: n \in \mathbb{N}\}\backslash \mathcal{B}\\
&\Rightarrow & \mathcal{M}=\{k_n\in\mathbb{N}:n\in \mathbb
N\}\subset \{k:\mathcal{F}_{x_{k}\xi}(t)>1-t\}\cup\mathcal{B}.
\end{eqnarray*}
Now if $d_\lambda(\{k\in
\mathbb{N}:\mathcal{F}_{x_{k_n}\xi}(t)>1-t\})=0$, then we get
$d_\lambda(\mathcal{M})=0$, a contradiction. Hence $\xi$ is a
strong $\lambda$-statistical cluster point of $x$. Since $\xi \in
\Lambda_x^{st}(\lambda)^\mathcal{F}_{s}$ is arbitrary, so
$\Lambda_x^{st}(\lambda)^\mathcal{F}_{s}\subset\Gamma_x^{st}(\lambda)^\mathcal{F}_{s}$.

Now let $\alpha\in\Gamma_x^{st}(\lambda)^\mathcal{F}_{s}$. Then
$\lambda$-density of the set
$$\{k\in\mathbb{N}:\mathcal{F}_{x_k\alpha}(t)>1-t\}$$ is not zero, for every $t>0$.
So there exists a subsequence $\{x\}_\mathcal{K}$ of $x$ that
strongly converges to $\alpha$. So, $\alpha\in L^\mathcal{F}_x$.

Therefore, $\Gamma_x^{st}(\lambda)^\mathcal{F}_{s}\subset
L^\mathcal{F}_x$.
\end{proof}
\begin{thm}
Let $(X,\mathcal{F},\tau)$ be a PM space, $x=\{x_k\}_{k
\in\mathbb{N}}$ be a sequence in $X$ and
$\lambda\in\Delta_\infty$. If
$st^\mathcal{F}_\lambda\mbox{-}\lim\limits_{k\rightarrow
\infty}x_k = \alpha$, then
$\Lambda_x^{st}(\lambda)^\mathcal{F}_{s}=\Gamma_x^{st}(\lambda)^\mathcal{F}_{s}=\{\alpha\}$.
\end{thm}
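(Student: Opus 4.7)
The plan is to combine the inclusion $\Lambda_x^{st}(\lambda)^\mathcal{F}_{s}\subset\Gamma_x^{st}(\lambda)^\mathcal{F}_{s}$ already established in Theorem 4.1 with two separate verifications: first, that $\alpha$ itself lies in $\Lambda_x^{st}(\lambda)^\mathcal{F}_{s}$, and second, that no point other than $\alpha$ can belong to $\Gamma_x^{st}(\lambda)^\mathcal{F}_{s}$. These two together will give $\{\alpha\}\subset\Lambda_x^{st}(\lambda)^\mathcal{F}_{s}\subset\Gamma_x^{st}(\lambda)^\mathcal{F}_{s}\subset\{\alpha\}$, yielding equality.

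To show $\alpha\in\Lambda_x^{st}(\lambda)^\mathcal{F}_{s}$, I would invoke Theorem 3.4. Since $st^\mathcal{F}_\lambda\mbox{-}\lim_{k\to\infty}x_k=\alpha$, that theorem furnishes a set $G=\{q_1<q_2<\cdots\}\subset\mathbb{N}$ with $d_\lambda(G)=1$ such that $\mathcal{F}\mbox{-}\lim_{n\to\infty}x_{q_n}=\alpha$. In particular $d_\lambda(G)=1\neq 0$, so $\{x\}_G$ is a $\lambda$-nonthin subsequence of $x$ strongly converging to $\alpha$, which is exactly the definition of $\alpha$ being a strong $\lambda$-statistical limit point.

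For the harder direction, $\Gamma_x^{st}(\lambda)^\mathcal{F}_{s}\subset\{\alpha\}$, I would argue by contradiction. Suppose $\beta\in\Gamma_x^{st}(\lambda)^\mathcal{F}_{s}$ with $\beta\neq\alpha$. Then $\mathcal{F}_{\alpha\beta}\neq\epsilon_0$, so there exists $t>0$ with $d_L(\mathcal{F}_{\alpha\beta},\epsilon_0)\geq t$. Using the continuity of $\tau$ via Note 2.1, choose $\gamma>0$ so small that whenever $d_L(\mathcal{F}_{uc},\epsilon_0)<\gamma$ and $d_L(\mathcal{F}_{cv},\epsilon_0)<\gamma$ one has $d_L(\mathcal{F}_{uv},\epsilon_0)<t$. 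Set
\[
A=\{k\in\mathbb{N}:\mathcal{F}_{x_k\alpha}(\gamma)>1-\gamma\},\qquad B=\{k\in\mathbb{N}:\mathcal{F}_{x_k\beta}(\gamma)>1-\gamma\}.
\]
By the strong $\lambda$-statistical convergence to $\alpha$, we have $d_\lambda(A^c)=0$, and by the strong $\lambda$-statistical cluster condition at $\beta$, $d_\lambda(B)\neq 0$. For any $k\in A\cap B$, Note 2.1 would force $d_L(\mathcal{F}_{\alpha\beta},\epsilon_0)<t$, contradicting the choice of $t$. Hence $A\cap B=\emptyset$, so $B\subset A^c$, and by the monotonicity of $\lambda$-density stated in Section 1, $d_\lambda(B)=0$, a contradiction.

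The main obstacle is the final density comparison: one must be careful that $\lambda$-density, which is only finitely subadditive (not a genuine measure), still allows the inference that $B\subset A^c$ with $d_\lambda(A^c)=0$ forces $d_\lambda(B)=0$. This is exactly the monotonicity property recorded in the introduction ($A\subset B$ and $d_\lambda(B)=0$ imply $d_\lambda(A)=0$), so the step is legitimate; apart from this the proof is a straightforward combination of Theorem 3.4, Theorem 4.1, and the uniform continuity of $\mathcal{F}$ captured by Note 2.1.
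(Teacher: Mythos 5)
Your proposal is correct and follows essentially the same route as the paper: establish $\alpha\in\Gamma_x^{st}(\lambda)^\mathcal{F}_{s}$ is the only cluster point via the triangle-type argument from Note 2.1 together with monotonicity of $\lambda$-density, obtain $\alpha\in\Lambda_x^{st}(\lambda)^\mathcal{F}_{s}$ from the decomposition theorem, and close the circle with the inclusion $\Lambda_x^{st}(\lambda)^\mathcal{F}_{s}\subset\Gamma_x^{st}(\lambda)^\mathcal{F}_{s}$ of Theorem 4.1. If anything, your justification that $A\cap B=\emptyset$ is spelled out more carefully than the paper's corresponding claim about $G\cap H$, which it attributes merely to $\alpha\neq\beta$.
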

\begin{proof}
Let $st^\mathcal{F}_\lambda\mbox{-}\lim\limits_{k\rightarrow
\infty}x_k = \alpha$. So for every $t>0$,
$d_\lambda(\{k\in\mathbb{N}:\mathcal{F}_{x_k\alpha}(t)>1-t\})=1$.
Therefore, $\alpha\in\Gamma_x^{st}(\lambda)^\mathcal{F}_{s}$. Now
assume that there exists at least one
$\beta\in\Gamma_x^{st}(\lambda)^\mathcal{F}_{s}$ such that
$\alpha\neq\beta$. Then $\mathcal{F}_{\alpha\beta}\neq\epsilon_0$.
Then there is a $t_1>0$ such that
$d_L(\mathcal{F}_{\alpha\beta},\epsilon_0)=t_1$. Then there exists
$t>0$ such that $d_L(\mathcal{F}_{uv},\epsilon_0)<t$ and
$d_L(\mathcal{F}_{vw},\epsilon_0)<t $ imply that
$d_L(\mathcal{F}_{uw},\epsilon_0)<t_1$. Now since
$\alpha,\beta\in\Gamma_x^{st}(\lambda)^\mathcal{F}_{s}$, for that
$t>0$, $d_\lambda(G)\neq 0$ and $d_\lambda(H)\neq 0$, where
$G=\{k\in\mathbb{N}: \mathcal{F}_{x_k\alpha}(t)> 1-t\}$ and
$H=\{k\in\mathbb{N}: \mathcal{F}_{x_k\beta}(t)> 1-t\}$. As,
$\alpha\neq\beta$, so $G\cap H=\emptyset$ and so $H\subset G^c$.
Since $st^\mathcal{F}_\lambda\mbox{-}\lim\limits_{k\rightarrow
\infty}x_k = \alpha$ so $d_\lambda(G^c)=0$. Then $d_\lambda(H)=0$,
which is a contradiction.

Therefore, $\Gamma_x^{st}(\lambda)^\mathcal{F}_{s}=\{\alpha\}$.

As $st^\mathcal{F}_\lambda\mbox{-}\lim\limits_{k\rightarrow
\infty}x_k = \alpha$, so from Theorem 3.5, we have
$\alpha\in\Lambda_x^{st}(\lambda)^\mathcal{F}_{s}$. Now by Theorem
4.1, we get
$\Lambda_x^{st}(\lambda)^\mathcal{F}_{s}=\Gamma_x^{st}(\lambda)^\mathcal{F}_{s}=\{\alpha\}$.
\end{proof}
\begin{thm}
Let $(X,\mathcal{F},\tau)$ be a PM space,
$\lambda\in\Delta_\infty$ and $x=\{x_k\}_{k\in \mathbb N}$,
$y=\{y_k\}_{k\in\mathbb N}$ be two sequences in $X$ such that
$d_\lambda(\{k\in \mathbb{N} : x_k \neq y_k \})=0$. Then
$\Lambda_x^{st}(\lambda)^\mathcal{F}_{s}=\Lambda_y^{st}(\lambda)^\mathcal{F}_{s}$
and $\Gamma_x^{st}(\lambda)^\mathcal{F}_{s}=
\Gamma_y^{st}(\lambda)^\mathcal{F}_{s}$.
\end{thm}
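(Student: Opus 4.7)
The plan is to exploit the fact that the exceptional set $E=\{k\in\mathbb{N}: x_k\neq y_k\}$ satisfies $d_\lambda(E)=0$, together with the two closure properties of the family of $\lambda$-density zero sets that were recorded in the introduction: (i) a subset of a $\lambda$-density zero set has $\lambda$-density zero, and (ii) the union of two $\lambda$-density zero sets again has $\lambda$-density zero. Using these, any set that is $\lambda$-nonthin (resp. not $\lambda$-density zero) for the $x$-side will transfer to the $y$-side after discarding $E$, and vice versa, which should give both equalities by symmetry.

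For the limit-point equality, I would take $\xi\in\Lambda_x^{st}(\lambda)^\mathcal{F}_{s}$ and a $\lambda$-nonthin index set $\mathcal{B}\subset\mathbb{N}$ such that $\{x\}_\mathcal{B}$ strongly converges to $\xi$. Setting $\mathcal{B}'=\mathcal{B}\setminus E$, I would argue that $\mathcal{B}'$ is again $\lambda$-nonthin: since $\mathcal{B}\cap E\subset E$ has $\lambda$-density zero, if $d_\lambda(\mathcal{B}')$ were zero, then $\mathcal{B}=\mathcal{B}'\cup(\mathcal{B}\cap E)$ would force $d_\lambda(\mathcal{B})=0$, contradicting the nonthinness of $\mathcal{B}$. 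Because $x_k=y_k$ for every $k\in\mathcal{B}'$, the subsequence $\{y\}_{\mathcal{B}'}$ coincides termwise with the subsequence $\{x\}_{\mathcal{B}'}$ of $\{x\}_\mathcal{B}$ and hence still strongly converges to $\xi$. Thus $\xi\in\Lambda_y^{st}(\lambda)^\mathcal{F}_{s}$, and reversing the roles of $x$ and $y$ gives the reverse inclusion.

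For the cluster-point equality, I would fix $\eta\in\Gamma_x^{st}(\lambda)^\mathcal{F}_{s}$ and an arbitrary $t>0$, and set $A_x(t)=\{k\in\mathbb{N}:\mathcal{F}_{x_k\eta}(t)>1-t\}$ together with the analogous $A_y(t)$. The key inclusion is $A_x(t)\setminus E\subset A_y(t)$, which is immediate because $k\notin E$ forces $x_k=y_k$ and so $\mathcal{F}_{y_k\eta}=\mathcal{F}_{x_k\eta}$. Now assume for contradiction that $d_\lambda(A_y(t))=0$. Then $A_x(t)\setminus E$, as a subset of a $\lambda$-density zero set, has $\lambda$-density zero, and therefore $A_x(t)=(A_x(t)\setminus E)\cup(A_x(t)\cap E)$ is a union of two $\lambda$-density zero sets, giving $d_\lambda(A_x(t))=0$ and contradicting $\eta\in\Gamma_x^{st}(\lambda)^\mathcal{F}_{s}$. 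Hence $A_y(t)$ does not have $\lambda$-density zero, so $\eta\in\Gamma_y^{st}(\lambda)^\mathcal{F}_{s}$, and symmetry completes the proof.

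The only subtle point I expect to navigate carefully is that ``$\lambda$-nonthin'' (and the analogous condition defining cluster points) permits the $\lambda$-density either to be positive or to fail to exist, so one cannot argue by direct manipulation of density values. The contradictions above must therefore be obtained by proving that the offending set has $\lambda$-density equal to zero, which is incompatible with both variants of nonthinness. Apart from this, the argument is structural and uses only the closure properties of $\lambda$-density zero sets recalled in the introduction.
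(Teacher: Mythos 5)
Your proposal is correct and follows essentially the same route as the paper: in both arguments the exceptional set $E=\{k:x_k\neq y_k\}$ is discarded (the paper intersects with its complement $\mathcal{B}$), and the closure properties of $\lambda$-density-zero sets under subsets and finite unions are used to show the surviving index set remains $\lambda$-nonthin. Your explicit handling of the case where the $\lambda$-density fails to exist is a point the paper leaves implicit, but the underlying argument is the same.
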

\begin{proof}
Let $\xi \in \Gamma_x^{st}(\lambda)^\mathcal{F}_{s}$ and
$\epsilon>0$ be given. Let $\mathcal{B}=\{k\in\mathbb N: x_k =
y_k\}$. Since $d_\lambda(\mathcal{B})=1$, so
$d_\lambda(\{k\in\mathbb N:\mathcal{F}_{x_k\xi}(t)>1-t\}\cap
\mathcal{B})$ is not zero. This gives $d_\lambda(\{k\in\mathbb
N:\mathcal{F}_{y_k\xi}(t)>1-t\})$ is not zero and so $\xi \in
\Gamma_y^{st}(\lambda)^\mathcal{F}_{s}$. Since $\xi \in
\Gamma_x^{st}(\lambda)^\mathcal{F}_{s}$ is arbitrary, so
$\Gamma_x^{st}(\lambda)^\mathcal{F}_{s}\subset
\Gamma_y^{st}(\lambda)^\mathcal{F}_{s}$. Similarly, we get $
\Gamma _x^{st}(\lambda)^\mathcal{F}_{s} \supset
\Gamma_y^{st}(\lambda)^\mathcal{F}_{s}$. Hence
$\Gamma_x^{st}(\lambda)^\mathcal{F}_{s}=
\Gamma_y^{st}(\lambda)^\mathcal{F}_{s}$.

Now let $\eta\in \Lambda_y^{st}(\lambda)^\mathcal{F}_{s}$. Then
$y$ has a $\lambda$-nonthin subsequence $\{y_{k_n}\}_{n\in \mathbb
N}$ that strongly converges to $\eta$. Let $\mathcal{Z}=\{k_n \in
\mathbb N : y_{k_n} =  x_{k_n}\}$. Since $d_\lambda(\{k_n\in
\mathbb N : y_{k_n}\neq x_{k_n}\})=0$ and $\{y_{k_n}\}_{n\in
\mathbb N}$ is a $\lambda$-nonthin subsequence of $y$ so
$d_\lambda(\mathcal{Z}) \neq 0$. Using the set $\mathcal{Z}$ we
get a $\lambda$-nonthin subsequence $\{x\}_{\mathcal{Z}}$ of the
sequence $x$ that strongly converges to $\eta$. Thus $\eta \in
\Lambda_x^{st}(\lambda)^\mathcal{F}_{s}$. Since $\eta \in \Lambda
_y^{st}(\lambda)^\mathcal{F}_{s}$ is arbitrary, so
$\Lambda_y^{st}(\lambda)^\mathcal{F}_{s}\subset
\Lambda_x^{st}(\lambda)^\mathcal{F}_{s}$. By similar argument, we
get $\Lambda_x^{st}(\lambda)^\mathcal{F}_{s}\subset \Lambda
_y^{st}(\lambda)^\mathcal{F}_{s}$. Hence
$\Lambda_x^{st}(\lambda)^\mathcal{F}_{s}=
\Lambda_y^{st}(\lambda)^\mathcal{F}_{s}$.
\end{proof}
\begin{thm}
Let $(X,\mathcal{F},\tau)$ be a PM space,
$x=\left\{x_{k}\right\}_{k\in\mathbb N}$ be a sequence in $X$ and
$\lambda\in\Delta_\infty$. Then the set
$\Gamma_x^{st}(\lambda)^\mathcal{F}_{s}$ is a strongly closed set.
\end{thm}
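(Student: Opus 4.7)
The plan is to prove the complement of $\Gamma_x^{st}(\lambda)^\mathcal{F}_{s}$ is strongly open, so that every point of $X$ not a strong $\lambda$-statistical cluster point admits a strong $r$-neighborhood disjoint from $\Gamma_x^{st}(\lambda)^\mathcal{F}_{s}$. Fix $\xi \notin \Gamma_x^{st}(\lambda)^\mathcal{F}_{s}$; by definition there exists $t_0 > 0$ such that the set $A = \{k \in \mathbb{N} : \mathcal{F}_{x_k\xi}(t_0) > 1 - t_0\}$ has $\lambda$-density zero.

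I would then invoke Note 2.1 (continuity of $\tau$) to choose $\gamma > 0$ such that $\mathcal{F}_{ac}(\gamma) > 1-\gamma$ and $\mathcal{F}_{cb}(\gamma) > 1-\gamma$ together imply $\mathcal{F}_{ab}(t_0) > 1-t_0$. The candidate open set around $\xi$ that should miss $\Gamma_x^{st}(\lambda)^\mathcal{F}_{s}$ is $\mathcal{N}_\xi(\gamma)$. So I need to show: if $\eta \in \mathcal{N}_\xi(\gamma)$, then $\eta \notin \Gamma_x^{st}(\lambda)^\mathcal{F}_{s}$.

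For such $\eta$ we have $\mathcal{F}_{\xi\eta}(\gamma) > 1-\gamma$. For any $k$ with $\mathcal{F}_{x_k\eta}(\gamma) > 1-\gamma$, applying the choice of $\gamma$ to the triple $(x_k, \eta, \xi)$ yields $\mathcal{F}_{x_k\xi}(t_0) > 1-t_0$. Therefore
\begin{equation*}
\{k \in \mathbb{N} : \mathcal{F}_{x_k\eta}(\gamma) > 1-\gamma\} \subset A.
\end{equation*}
Since $A$ has $\lambda$-density zero and the inclusion gives $|\{k \in I_n : \mathcal{F}_{x_k\eta}(\gamma) > 1-\gamma\}|/\lambda_n \leq |A(n)|/\lambda_n \to 0$, the left-hand set has $\lambda$-density zero as well. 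Hence $\eta$ is not a strong $\lambda$-statistical cluster point of $x$, so $\mathcal{N}_\xi(\gamma) \subset X \setminus \Gamma_x^{st}(\lambda)^\mathcal{F}_{s}$, proving $\Gamma_x^{st}(\lambda)^\mathcal{F}_{s}$ is strongly closed.

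The only delicate point is the asymmetry of the condition ``does not have $\lambda$-density zero'': the upward closure of a density-zero set could, a priori, fail to have a density rather than having positive density. But the argument above goes in the safe direction: we need a \emph{subset} of a density-zero set to be density-zero, which is immediate from the monotonicity of $|\cdot(n)|/\lambda_n$ pointed out after the definition of $\lambda$-density in the introduction. So there is no real obstacle; the whole proof is a straightforward unwinding of definitions once Note 2.1 is used to control $\mathcal{F}_{x_k\xi}$ in terms of $\mathcal{F}_{x_k\eta}$ and $\mathcal{F}_{\eta\xi}$.
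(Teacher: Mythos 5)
Your proof is correct. The paper omits this proof entirely (``trivial, so is omitted''), and your argument---showing the complement is strongly open by using Note 2.1 to pick $\gamma$ so that $\{k:\mathcal{F}_{x_k\eta}(\gamma)>1-\gamma\}\subset\{k:\mathcal{F}_{x_k\xi}(t_0)>1-t_0\}$ for every $\eta\in\mathcal{N}_\xi(\gamma)$, then invoking monotonicity of $\lambda$-density under inclusion---is exactly the standard argument the authors presumably had in mind.
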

\begin{proof}
The proof is trivial, so is omitted.
\end{proof}
\begin{thm}
Let $(X,\mathcal{F},\tau)$ be a PM space,
$x=\left\{x_{k}\right\}_{k\in\mathbb N}$ be a sequence in $X$ and
$\lambda\in\Delta_\infty$. Let $C$ be a strongly compact subset of
$X$ such that
$C\cap\Gamma_x^{st}(\lambda)^\mathcal{F}_{s}=\emptyset$. Then
$d_\lambda(G)=0$, where $G=\{k\in\mathbb{N}:x_k\in C\}$.
\end{thm}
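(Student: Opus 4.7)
The plan is to use the strong compactness of $C$ to reduce the problem to a finite union of $\lambda$-density zero sets, exploiting that no point of $C$ is a strong $\lambda$-statistical cluster point of $x$.

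First, I would fix an arbitrary $y\in C$. Since $y\notin\Gamma_x^{st}(\lambda)^\mathcal{F}_{s}$, negating Definition 4.3 gives a $t_y>0$ for which the set $\{k\in\mathbb{N}:\mathcal{F}_{x_ky}(t_y)>1-t_y\}$ does have $\lambda$-density zero; equivalently,
\[
d_\lambda\bigl(\{k\in\mathbb{N}:x_k\in\mathcal{N}_y(t_y)\}\bigr)=0.
\]
Note that $y\in\mathcal{N}_y(t_y)$ since $\mathcal{F}_{yy}=\epsilon_0$, so $\mathcal{F}_{yy}(t_y)=1>1-t_y$. Therefore the family $\{\mathcal{N}_y(t_y):y\in C\}$ is a strong open cover of $C$.

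Next I would invoke strong compactness of $C$: there exist finitely many points $y_1,y_2,\ldots,y_m\in C$ such that
\[
C\subset\bigcup_{i=1}^{m}\mathcal{N}_{y_i}(t_{y_i}).
\]
Consequently,
\[
G=\{k\in\mathbb{N}:x_k\in C\}\subset\bigcup_{i=1}^{m}\{k\in\mathbb{N}:x_k\in\mathcal{N}_{y_i}(t_{y_i})\}.
\]
Each set on the right-hand side has $\lambda$-density zero by the first step. Using the fact recalled in Section~1 that the union of two (hence, by induction, finitely many) subsets of $\mathbb{N}$ of $\lambda$-density zero still has $\lambda$-density zero, together with monotonicity of $d_\lambda$ under inclusion for density-zero sets, I conclude $d_\lambda(G)=0$.

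The only non-routine step is really the first one, namely extracting a single $t_y>0$ that witnesses the failure of the cluster-point condition uniformly (it does not need to be uniform across $y\in C$, which is why compactness then finishes the job). The rest is a standard compactness-plus-finite-additivity argument; the necessary lemma on finite unions of $\lambda$-density zero sets is already available in the preliminaries, so no further obstacle is anticipated.
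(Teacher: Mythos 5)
Your proposal is correct and follows essentially the same route as the paper: for each $y\in C$ the failure of the cluster-point condition yields a $t_y>0$ with $d_\lambda(\{k:\mathcal{F}_{x_ky}(t_y)>1-t_y\})=0$, the neighborhoods $\mathcal{N}_y(t_y)$ cover $C$, strong compactness extracts a finite subcover, and $G$ is absorbed into a finite union of $\lambda$-density-zero sets. No gaps beyond what the paper itself glosses over.
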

\begin{proof}
As $C\cap\Gamma_x^{st}(\lambda)^\mathcal{F}_{s}=\emptyset$, so for
all $\beta\in C$, there exists a real number $t=t(\beta)>0$ so
that
$d_\lambda(\{k\in\mathbb{N}:\mathcal{F}_{x_k\beta}(t)>1-t\})=0$.
Let $B_\beta(t)=\{a\in X:\mathcal{F}_{a\beta}(t)>1-t\}$. Then the
family of strongly open sets $\mathfrak{Q}=\{B_\beta(t):\beta\in
C\}$ forms a strong open cover of $C$. As $C$ is a strongly
compact set, so there exists a finite subcover
$\{B_{\beta_1}(t_1),B_{\beta_2}(t_2),...,B_{\beta_m}(t_m)\}$ of
the strong open cover $\mathfrak{Q}$. Then
$C\subset\bigcup\limits_{j=1}^mB_{\beta_j}(t_j)$ and also for each
$j=1,2,...,m$ we have
$d_\lambda(\{k\in\mathbb{N}:\mathcal{F}_{x_k\beta_j}(t_j)>1-t_j\})=0$.
So,
$$\left|\{k\in\mathbb{N}:x_k\in C\right\}|\leq\sum\limits_{j=1}^m\left|\{k\in\mathbb{N}:\mathcal{F}_{x_k\beta_j}(t_j)>1-t_j\}\right|.$$ Then,
$$\lim\limits_{n\rightarrow\infty}\frac{1}{\lambda_n}\left|\{k\in
I_n: x_k\in C\}\right|
\leq\lim\limits_{n\rightarrow\infty}\frac{1}{\lambda_n}\sum\limits_{j=1}^m\left|\{k\in
I_n:\mathcal{F}_{x_k\beta_j}(t_j)>1-t_j\}\right|=0.$$ This gives
$d_\lambda(G)=0$, where $G=\{k\in\mathbb{N}:x_k\in C\}$.
\end{proof}
\begin{thm}
Let $(X,\mathcal{F},\tau)$ be a PM space and
$x=\left\{x_{k}\right\}_{k\in\mathbb N}$ be a sequence in $X$. If
$x$ has a strongly bounded $\lambda$-nonthin subsequence, then the
set $\Gamma_x^{st}(\lambda)^\mathcal{F}_{s}$ is nonempty and
strongly closed.
\end{thm}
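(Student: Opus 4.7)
The plan is to split the claim into two parts. The \emph{strongly closed} part is immediate from Theorem~4.3, which was already established and needs only to be cited. So the whole substance of the argument lies in showing that $\Gamma_x^{st}(\lambda)^\mathcal{F}_{s}$ is \emph{nonempty}, and here I would argue by contradiction using Theorem~4.4.

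More concretely, let $\{x\}_\mathcal{B}$ be the given strongly bounded $\lambda$-nonthin subsequence of $x$. By definition there exists a strongly compact set $E \subset X$ such that $x_k \in E$ for every $k \in \mathcal{B}$, and $d_\lambda(\mathcal{B})$ is not zero (either it is a positive number or fails to exist). Suppose, toward a contradiction, that $\Gamma_x^{st}(\lambda)^\mathcal{F}_{s} = \emptyset$. Then in particular $E \cap \Gamma_x^{st}(\lambda)^\mathcal{F}_{s} = \emptyset$. Applying Theorem~4.4 to the strongly compact set $C = E$ yields $d_\lambda(G) = 0$, where $G = \{k \in \mathbb{N} : x_k \in E\}$.

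Now observe that $\mathcal{B} \subset G$ by construction. Since $\lambda$-density is monotone with respect to set inclusion (a subset of a set of $\lambda$-density zero itself has $\lambda$-density zero, as noted in the introduction), we would conclude $d_\lambda(\mathcal{B}) = 0$, contradicting the $\lambda$-nonthinness of $\{x\}_\mathcal{B}$. Hence $\Gamma_x^{st}(\lambda)^\mathcal{F}_{s} \neq \emptyset$, and combining with Theorem~4.3 the full claim follows.

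I do not expect any genuine obstacle here: both halves of the conclusion reduce to results already proved in the paper, and the only subtlety is to handle the case in which $d_\lambda(\mathcal{B})$ fails to exist, which is dispatched by the monotonicity observation above rather than by a limit computation.
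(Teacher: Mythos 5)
Your argument is correct and is essentially identical to the paper's own proof: both establish nonemptiness by contradiction via the compact-set result (your ``Theorem~4.4,'' which is Theorem~4.5 in the paper's numbering) together with monotonicity of $\lambda$-density applied to $\mathcal{B}\subset G$, and both obtain closedness by citing the general closedness of $\Gamma_x^{st}(\lambda)^\mathcal{F}_{s}$ (Theorem~4.4 in the paper, not 4.3). Only the theorem numbers need adjusting.
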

\begin{proof}
Let $\{x\}_\mathcal{B}$ be a strongly bounded $\lambda$-nonthin
subsequence of $x$. So $d_\lambda(\mathcal{B})\neq 0$ and there
exists a strongly compact subset $C$ of $X$ such that $x_k\in C$
for all $k\in \mathcal{B}$. If
$\Gamma_x^{st}(\lambda)^\mathcal{F}_{s}=\emptyset$ then
$C\cap\Gamma_x^{st}(\lambda)^\mathcal{F}_{s}=\emptyset$ and then
by Theorem 4.5, we get $d_\lambda(G)=0$, where
$G=\{k\in\mathbb{N}:x_k\in C\}$. But $\left|\{k\in
I_n:k\in\mathcal{B}\}\right|\leq \left|\{k\in I_n:x_k\in
C\}\right|$, which gives $d_\lambda(\mathcal{B})=0$, which
contradicts our assumption. Hence
$\Gamma_x^{st}(\lambda)^\mathcal{F}_{s}$ is nonempty and also by
Theorem 4.4, $\Gamma_x^{st}(\lambda)^\mathcal{F}_{s}$ is strongly
closed.
\end{proof}
\begin{defn}
Let $(X,\mathcal{F},\tau)$ be a PM space and
$x=\left\{x_{k}\right\}_{k\in\mathbb N}$ be a sequence in $X$.
Then $x$ is said to be strongly $\lambda$-statistically bounded if
there exists a strongly compact subset $C$ of $X$ such that
$d_\lambda(\{k\in\mathbb{N}:x_k\notin C\})=0$.
\end{defn}
\begin{thm}
Let $(X,\mathcal{F},\tau)$ be a PM space and
$x=\left\{x_{k}\right\}_{k\in\mathbb N}$ be a sequence in $X$. If
$x$ is strongly $\lambda$-statistically bounded then the set
$\Gamma_x^{st}(\lambda)^\mathcal{F}_{s}$ is nonempty and strongly
compact.
\end{thm}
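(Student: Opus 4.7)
The plan is to combine the preceding theorems: first use Theorem 4.6 to get nonemptiness, then show that $\Gamma_x^{st}(\lambda)^\mathcal{F}_{s}$ lies inside the compact set $C$ witnessing strong $\lambda$-statistical boundedness, so that by Theorem 4.4 and Theorem 2.3 it is strongly compact.

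First I would exploit strong $\lambda$-statistical boundedness directly. By hypothesis there is a strongly compact set $C \subset X$ with $d_\lambda(\{k \in \mathbb{N}: x_k \notin C\}) = 0$. Setting $\mathcal{B} = \{k \in \mathbb{N}: x_k \in C\}$, we get $d_\lambda(\mathcal{B}) = 1 \neq 0$, so $\{x\}_\mathcal{B}$ is a $\lambda$-nonthin subsequence of $x$. Since $x_k \in C$ for every $k \in \mathcal{B}$ and $C$ is strongly compact, this subsequence is strongly bounded in the sense of Definition 2.12. Hence Theorem 4.6 applies and yields $\Gamma_x^{st}(\lambda)^\mathcal{F}_{s} \neq \emptyset$ (and also strongly closed, which we already know from Theorem 4.4).

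Next I would show $\Gamma_x^{st}(\lambda)^\mathcal{F}_{s} \subset C$. Let $\xi \in X \setminus C$. Because the strong topology is Hausdorff and $C$ is strongly compact, $C$ is strongly closed, so $X \setminus C$ is strongly open and contains $\xi$. Since $\{\mathcal{N}_\xi(r): r > 0\}$ is the strong neighborhood system at $\xi$ (Definition 2.9), there exists $t > 0$ with $\mathcal{N}_\xi(t) \cap C = \emptyset$. For every index $k$ with $x_k \in C$ we then have $x_k \notin \mathcal{N}_\xi(t)$, i.e.\ $\mathcal{F}_{x_k \xi}(t) \leq 1 - t$. Consequently
$$\{k \in \mathbb{N}: \mathcal{F}_{x_k\xi}(t) > 1 - t\} \subset \{k \in \mathbb{N}: x_k \notin C\},$$
and the right-hand set has $\lambda$-density zero. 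Therefore $\xi \notin \Gamma_x^{st}(\lambda)^\mathcal{F}_{s}$, which proves the inclusion.

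Finally, $\Gamma_x^{st}(\lambda)^\mathcal{F}_{s}$ is a strongly closed (Theorem 4.4) subset of the strongly compact set $C$, so by Theorem 2.3 it is strongly compact, completing the proof. The only nontrivial point is the step $\Gamma_x^{st}(\lambda)^\mathcal{F}_{s} \subset C$, and even there the work is confined to extracting a strong $r$-neighborhood of $\xi$ disjoint from $C$ from the fact that $C$ is strongly closed; once this is in hand the density estimate is immediate and everything else is a direct appeal to previously established results.
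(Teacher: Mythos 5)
Your proposal is correct and follows essentially the same route as the paper: extract the $\lambda$-nonthin subsequence lying in $C$ to invoke Theorem 4.6 for nonemptiness, prove $\Gamma_x^{st}(\lambda)^\mathcal{F}_{s}\subset C$ via a strong $r$-neighborhood of an outside point disjoint from $C$, and conclude compactness from closedness inside the compact set $C$. The only cosmetic differences are that you argue the inclusion contrapositively rather than by contradiction and you spell out why such a disjoint neighborhood exists.
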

\begin{proof}
Let $x$ be strongly $\lambda$-statistically bounded. Let $C$ be a
strongly compact set with $d_\lambda(E)=0$, where
$E=\{k\in\mathbb{N}:x_k\notin C\}$. Then $d_\lambda(E^c)=1\neq 0$
and so $C$ contains a $\lambda$- nonthin subsequence of $x$. So,
by Theorem 4.6, $\Gamma_x^{st}(\lambda)^\mathcal{F}_{s}$ is
nonempty and strongly closed. We now prove that
$\Gamma_x^{st}(\lambda)^\mathcal{F}_{s}$ is strongly compact. For
this we only show that
$\Gamma_x^{st}(\lambda)^\mathcal{F}_{s}\subset C$. If possible,
let $\eta\in \Gamma_x^{st}(\lambda)^\mathcal{F}_{s}\setminus C$.
As $C$ is strongly compact so there is a $q>0$ such that
$\mathcal{N}_\eta(q)\cap C=\emptyset$. So we get
$\{k\in\mathbb{N}:\mathcal{F}_{x_k\eta}(q)>1-q\}\subset\{k\in\mathbb{N}:x_k\notin
C\}$ which implies that
$d_\lambda(\{k\in\mathbb{N}:\mathcal{F}_{x_k\eta}(q)>1-q\})=0$,
which contradicts that
$\eta\in\Gamma_x^{st}(\lambda)^\mathcal{F}_{s}$. So,
$\Gamma_x^{st}(\lambda)^\mathcal{F}_{s}\subset C$.
\end{proof}
\noindent\textbf{Acknowledgment:} The second author is grateful to
Council of Scientific and Industrial Research, India for his
fellowships funding under CSIR-JRF scheme during the preparation
of this paper.
\\


\begin{thebibliography}{99}

\bibitem{Co1} Connor, J., Fridy, J., Kline, J., \textit{Statistically pre-Cauchy Sequences}, Analysis 14, 311-317, (1994).

\bibitem{Co2} Connor, J., \textit{R-type summability methods, Cauchy criteria, P-sets and Statistical
convergence}, Proc. Amer. Math. Soc. 115, 319-327, (1992).

\bibitem{Co3} Connor, J., \textit{The statistical and strong P-Cesaro convergence of sequences}, Analysis 8, 47-63, (1988).


\bibitem{Da3} Das, P., Dutta, K., Karakaya, V., Ghosal, S., \textit{On some further generalizations of strong convergence in probabilistic metric spaces using ideals}, Abstract and App. Anal. DOI: 10.1155/2013/765060, (2013).


\bibitem{De1} Dems, K., \textit{On $\mathcal{I}$-Cauchy sequences}, Real Analysis Exchange 30(1), 123-128, (2004).

\bibitem{Du1} Dutta, K., Malik, P., Maity, M., \textit{Statistical Convergence of Double Sequences in Probabilistic Metric Spaces}, Sel\c{c}uk J. Appl. Math. 14(1), 57-70, (2013).


\bibitem{Fa} Fast, H., \textit{Sur la convergence statistique}, Colloq. Math 2, 241-244, (1951).

\bibitem{Fr1} Fridy, J., A., \textit{On statistical convergence}, Analysis  5, 301-313, (1985).

\bibitem{Fr2} Fridy, J., A., \textit{Statistical limit points}, Proc. Amer. Math. Soc. 118(4), 1187-1192, (1993).

\bibitem{Fr3} Fridy, J., A., Orhan, C., \textit{Statistical limit superior and limit inferior}, Proc. Amer. Math. Soc. 125, 3625-3631, (1997).

\bibitem{Ka} Karakus, S., \textit{Statistical convergence on probabilistic normed spaces}, Math. Commun. 12, 11-23, (2007).





\bibitem{Me} Menger, K., \textit{Statistical metrics}, Proc. Nat. Acad. Sci. USA 28, 535-537, (1942).

\bibitem{M} Mursaleen, M., \textit{$\lambda$-statistical convergence}, Mathematica Slovaca 50(1), 111-115, (2000).






\bibitem{Pe} Pehlivan, S., G$\ddot{u}$ncan, A., Mamedov, M. A., \textit{Statistical cluster points of sequences in finite dimensional spaces}, Czechoslovak
Mathematical Journal 54(129), 95-102, (2004).


\bibitem{Sa} S\'alat, T., \textit{On statistically convergent sequences of real numbers}, Math. Slovaca 30, 139-150, (1980).

\bibitem{Sav} Savas, E., Das, P., \textit{A generalized statistical convergence via ideals}, Appl. Math. Lett. 24, 826-830, (2011).

\bibitem{Sc} Schoenberg, I., J., \textit{The integrability of certain functions and related summability methods},  Amer. Math.
Monthly 66, 361-375, (1959).

\bibitem{Sh} Schweizer, B., Sklar, A., \textit{Statistical metric spaces}, Pacific J. Math. 10, 314-334, (1960).

\bibitem{Sh1} Schweizer, B., Sklar, A., Thorp, E., \textit{The metrization of statistical metric spaces}, Pacific J. Math. 10, 673-675, (1960).

\bibitem{Sh2} Schweizer, B., Sklar, A., \textit{Statistical metric spaces arising from sets of random variables in Euclidean n-space}, Theory of probability and its Applications 7, 447-456, (1962).

\bibitem{Sh3} Schweizer, B., Sklar, A., \textit{Tringle inequalities in a class of statistical metric spaces}, J. London Math. Soc. 38, 401-406,(1963).

\bibitem{Sh4} Schweizer, B., Sklar, A., \textit{Probabilistic Metric Spaces}, North Holland: New York, Amsterdam, Oxford (1983).

\bibitem{Se} \c{S}en\c{c}imen, C., Pehlivan, S., \textit{Strong statistical convergence in probabilistic metric space}, Stoch. Anal. Appl. 26, 651-664, (2008).



\bibitem{St} Steinhus, H., \textit{Sur la convergence ordinatre et la convergence asymptotique}, Colloq. Math. 2, 73-74, (1951).



\bibitem{Tar}  Tardiff, R., M., \textit{Topologies for Probabilistic Metric spaces}, \textit{Pacific J. Math.} 65, 233-251, (1976).

\bibitem {Th} Thorp, E., \textit{Generalized topologies for statistical metric spaces}, Fundamenta Mathematicae 51, 9-21, (1962).

\end{thebibliography}
\end{document}